\documentclass{article}

\usepackage{arxiv}
\usepackage{amsmath}
\usepackage{graphicx}
\usepackage{doi}
\usepackage{longtable}
\usepackage{amsmath}
\usepackage{graphicx}
\usepackage{geometry}
\usepackage{color}
\usepackage{algorithm}
\usepackage{algpseudocode}
\usepackage{amsthm}
\usepackage{bbm}
\usepackage{amssymb}
\usepackage{appendix}
\newtheorem{proposition}{Proposition}
\newtheorem{theorem}{Theorem}
\title{Who Goes Next? Optimizing the Allocation of Adherence-Improving Interventions}

\date{July 9, 2024}	

\author{ \href{https://orcid.org/0000-0003-2404-1635}{\includegraphics[scale=0.06]{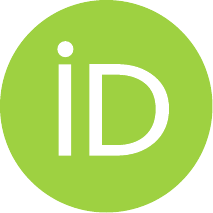}\hspace{1mm}Daniel Otero-Leon}\thanks{Webpage: \href{https://www.danieloteroleon.com}{www.danieloteroleon.com}} \\
	Department of Systems and Information Engineering\\
	University of Virginia\\
	Charlottesville, VA 22903 \\
	\texttt{dfotero@virginia.edu} \\
	\And
	\hspace{1mm}Mariel S. Lavieri \\
	Department of Industrial and Operations Engineering\\
	University of Michigan\\
	Ann Arbor, MI 48109 \\
	\texttt{lavieri@umich.edu} \\
 \And
	\hspace{1mm}Brian T. Denton \\
	Department of Industrial and Operations Engineering\\
	University of Michigan\\
	Ann Arbor, MI 48109 \\
	\texttt{btdenton@umich.edu} \\
  \And
	\hspace{1mm}Jerremy Sussman \\
	Department of Internal Medicine\\
	University of Michigan\\
	Ann Arbor, MI 48109 \\
	\texttt{jeremysu@med.umich.edu} \\
  \And
	\hspace{1mm}Rodney A. Hayward \\
	Department of Internal Medicine\\
	University of Michigan\\
	Ann Arbor, MI 48109 \\
	\texttt{rhayward@med.umich.edu} \\
}

\hypersetup{
pdftitle={A template for the arxiv style},
pdfsubject={q-bio.NC, q-bio.QM},
pdfauthor={David S.~Hippocampus, Elias D.~Striatum},
pdfkeywords={First keyword, Second keyword, More},
}

\begin{document}
\maketitle

\begin{abstract}
	Long-term adherence to medication is a critical factor in preventing chronic diseases, such as cardiovascular disease. To address poor adherence, physicians may recommend adherence-improving interventions; however, such interventions are costly and limited in their availability. Knowing which patients will stop adhering helps distribute the available resources more effectively. We developed a binary integer program (BIP) model to select patients for adherence-improving intervention under budget constraints.  We further studied a long-term adherence prediction model using dynamic logistic regression (DLR) model that uses patients' claim data, medical health factors, demographics, and monitoring frequencies to predict the risk of future non-adherence. We trained and tested our predictive model to longitudinal data for cardiovascular disease in a large cohort of patients taking medication for cholesterol control seen in the national Veterans Affairs health system. Our study shows the importance of including past adherence to increase prediction accuracy. Finally, we assess the potential benefits of using the prediction model by proposing an algorithm that combines the DLR and BIP models to decrease the number of CVD events in a population.
\end{abstract}

\keywords{Cardiovascular diseases \and Medication adherence forecast \and Resource allocation}

\section{Introduction}

Correct long-term use of medications is one of the primary methods to prevent chronic diseases effectively \cite{Burnier2018}. However, studies have shown that many patients will eventually stop taking medications at the end of the first year of prescription, increasing the risk of developing diseases or experiencing adverse outcomes \cite{Burnier2018}. The lack of medication adherence has been attributed to several factors, including patients' misconceptions over their health, the cost of medication, and the complexity of treatment when multiple medications are involved \cite{Vonbank2017}. Healthcare providers use interventions to mitigate decays in adherence, where clinical trials have tested these interventions in various settings by varying cost, patient involvement, and healthcare provider involvement \cite{Kini2018}. These interventions are divided into different categories: patient education, medication management, clinical pharmacist education, cognitive behavioral training, reminders, and financial incentives \cite{Brookhart2007}, \cite{Lee2006}. 

Interventions within the categories of patient education, clinical pharmacist education, and cognitive behavioral training show the best results, where the percentage of patients that adhere increases to almost $90\%$ \cite{Kini2018}. Nonetheless, these interventions require significant time from the healthcare providers, including training. Moreover, they also need time commitments from patients. Therefore, not all patients have the opportunity to participate. On the other hand, interventions, such as reminders via text messages, phone calls, and brochures, are less expensive, and most patients are included. Unfortunately, the success of these interventions is low \cite{Kini2018} unless they are applied alongside patient education and follow-up to identify and address barriers \cite{Bingham2021}. Nevertheless, reminders via electronic pill counts are successful, where the percentage of patients adhering increases to almost $90\%$. Similar to previous interventions, these tend to be costly, as these methods use electronic pill bottles that track when patients are opening the boxes, taking medication, refilling, and giving alerts when missing doses. Additionally, to increase effectiveness, electronic pill count methods need additional support from clinicians \cite{Kini2018}.

In general, effective interventions have a limited capacity. A limited budget restricts healthcare providers regarding how many patients they can recommend for interventions to offset non-adhering behavior \cite{Kini2018}. Additionally, these interventions are most effective from a population perspective if they can be allocated to patients at the greatest risk of non-adherence \cite{Slejko2014}, \cite{Conn2017}. A variety of factors could potentially help predict future adherence, such as demographic information, clinical information, and prior adherence \cite{McQuaid2018}. However, it is unclear how to predict long-term adherence and how to update predictions as new information about the patients is obtained over time. Moreover, patient adherence varies over time and dynamically changes depending on the patient's health behavior \cite{Franklin2015}. Therefore, the primary goals of this article are as follows:

\begin{enumerate}
    \item Propose and validate dynamic prediction models that use claims data to identify when patients will stop adhering to medications. 
    \item Understand how the dynamic prediction improves the selection of patients that should be included in adherence-improving interventions subject to budget constraints.
\end{enumerate}

To accomplish these goals, we present a forecasting model using \textit{dynamic logistic regression} (DLR) to predict whether patients will experience persistent periods of non-adherence to help providers decide which patients in their panel should be recommended for adherence-improving interventions. We apply DLR in the context of primary prevention of cardiovascular disease (CVD) and compare the performance to previously proposed methods. We use CVD as an example because it is one of the leading causes of death in the United States \cite{Pool2018}, however, our approach can be easily extended to other contexts. We focus on patients' adherence to the most well-known medication for risk reduction (statins), where long-term adherence is a serious problem, with only 35\% of patients adhering one year after starting treatment \cite{Burnier2018}. Additionally, we develop a \textit{binary integer programming} (BIP) model that uses adherence predictions to select which patients should be assigned to an intervention subject to a budget constraint at each epoch. We apply our model to longitudinal data for a large cohort of patients seen in the national Veterans Affairs health system who initiate statin treatment for primary prevention of CVD \cite{Render2011}. We evaluate a prediction horizon of 1 to 5 years because we focus on long-term adherence, and statins are effective if they are regularly taken for an extended period \cite{Stockl2008}. Additionally, we create a simulation model to test recommendations from the DLR to optimize the selection of patients for interventions subject to a budget constraint. We validate the simulation model by comparing the number of CVD events without adherence-improving interventions versus the current number of CVD events in the studied population. This framework has the objective of helping providers decide which patients to recommend for adherence-improving interventions, acknowledging their budget constraints.

The rest of the paper is organized as follows. In Section \ref{sec4:literature}, we present the literature review related to adherence forecasting and patient selection models under limited resources and highlight the differences between our approach and the ones presented in the literature. In Section \ref{sec4:methods}, we present our modeling framework consisting of the DLR model, the BIP model, and the simulation environment where we test our approach. In Section \ref{sec4:results},  we apply this framework to a case study based on patients seen in the national Veterans Affairs health system. We compare our model versus current selection rules for adherence interventions. Finally, we discuss our results and summarize our main conclusions.

\section{Literature Review}\label{sec4:literature}

The most relevant research related to our work falls into the following fields: (1) adherence prediction models; (2) DLR models for healthcare; and (3) optimization models for resource allocation. This section highlights papers related to our work and briefly describes how our proposed methodology differs.

Prior literature proposes classifying patients' adherence based on pharmacy claims data using the initial fill and subsequent refill dates together with the pill count to estimate the percentage days covered (PDC). Two approaches that use PDC are commonly applied: \textit{binary classification} or classification based on the \textit{adherence trajectory}. Binary classification assigns the patient to be adherent or not adherent using multivariate logistic regression \cite{Hickson2017} or logistic regression with lag \cite{Hu2020} to predict if the patient's adherence will drop below a predefined threshold. The trajectory method classifies the patient according to predefined trajectories based on how quickly the patient goes below a predefined adherence threshold using group-based trajectory models \cite{Franklin2015} or logistic group-based trajectory models \cite{Franklin2018}. Unfortunately, these papers classify patients in predefined trajectories, which may exclude patients with different adherence behaviors. We propose to apply a DLR model, which can construct an individual adherence trajectory for each patient that learns from past medical records and sequentially updates predictions over time as new information is obtained. 

In healthcare, DLR has been used to forecast high blood pressure in children \cite{Hamoen2018}, surgical errors \cite{I.L.2007}, and blood pressure in adults\cite{Campos2019}. For trajectory methods, the literature suggests that adherence follows an autocorrelation pattern \cite{Franklin2015}. Therefore, DLR is a great candidate as this model constructs the trajectory by dynamically estimating adherence using previous estimations and including the autocorrelation effect. The literature also uses different machine learning approaches to model adherence for multiple periods, such as temporal deep learning for five years into the future \cite{Hsu2022} and random forests for the next two weeks \cite{Koesmahargyo2020}. Similarly, our model can predict how patient adherence will evolve, giving healthcare providers additional information beyond a classification method. Furthermore, our DLR model's dynamic nature can fit each patient's initial adherence trajectory and improves prediction accuracy when new information is available. Finally, the DLR model can also estimate the relationship between key factors and adherence behavior, which black box models struggle with and is needed to understand why patients stop adhering \cite{Prakash2021}. 

To develop these models, we use key factors comprising demographics, historical clinical information, and historical adherence information. For predicting adherence to statins, most of the prior literature considers covariates such as demographic characteristics, medical risk factors, and health provider-related factors \cite{Morotti2019}, \cite{Krumme2017}. We included the adherence history as a covariate, improving upon previous studies that suggest including only the last measurement of adherence \cite{Zullig2019}. Also, DLR models usually forecast short-term risk measured in days or weeks, as opposed to years in our case. For example, a recent study forecasts blood pressure on the time scale of days \cite{Campos2019} for CVD. In contrast, we forecast adherence to statins five years into the future because cholesterol tends to follow closely within 1 to 6 years for patients with an increased likelihood of having a CVD event \cite{Grundy2018}. We also include random effects in our model to acknowledge different adherence patterns among patient groups.

Regarding resource allocation, the literature focuses on two types of problems: immediate resource allocation for patients, for example, ICUs and surgery \cite{Abdalkareem2021} \cite{Kim2015}
\cite{Listorti2022} \cite{Meng2015} \cite{Wang2022}, and long-term resource allocation, such as screenings, follow-ups, and transplants \cite{Sabouri2017} \cite{Zacharias2017} \cite{Truong2015}. Depending on the application, the literature presents different methods to solve this problem, such as linear programming models \cite{Rath2015}, dynamic programming \cite{Creps2017} \cite{Sun2017}, and simulation \cite{Freeman2017}. Regarding long-term resource allocation for multiple patients and resource constraints, multiple models used dynamic approaches such as reinforcement learning \cite{Yu2019}, multi-armed bandits \cite{Negoescu2017}, restless multi-armed bandits \cite{Lee2019}. Most of these models allocate resources for one patient per epoch, differing from our problem, where the physician allocates resources for multiple patients during the same year. Other papers, which allocate resources for multiple patients, assumed a homogeneous population \cite{Deo2013}, \cite{Herlihy2021}. Regarding allocating resources for multiple patients in a heterogeneous population, studies have assumed perfect compliance to physician recommendations \cite{Ayer2019} \cite{Chen2016} \cite{ElHajj2022} \cite{Ho2019} \cite{Zayas2019}. Our approach considers resource allocation for multiple patients in a heterogeneous population with limited resources and imperfect compliance. We model compliance as adherence to medication and the success of interventions, as adherence-improving interventions are not perfect; for some patients, these interventions may not work \cite{Prakash2021}. Therefore, we consider the probability of the intervention not working as part of the decision.

Finally, studies that define resource allocation models have focused on different aspects of adherence-improving interventions. Studies have assumed cost-effectiveness approaches to define when a patient should be selected for intervention \cite{Rao2020}. Other approaches defined the optimal coinsurance rates that maximize the population's welfare and select patients that should receive better reductions \cite{Schell2019}. Studies have also focused on defining the optimal incentive rate for financial incentive interventions \cite{Suen2022} \cite{Zhang2022}. Related to intervention capacity, studies estimated the minimum capacity needed to satisfy the required adherence level in the population \cite{Ratcliffe2019} and developed techniques for patient care within the pharmacy to serve the maximum number of patients\cite{Chen2022}. Our work focuses on fixed-capacity interventions such as patient education, clinical pharmacist education, and reminders, all of which come at a cost. We propose a binary integer programming (BIP) model in which we select multiple patients in a heterogeneous population per epoch. Given the dynamic aspect of our problem, we propose an adaptive selection rule that combines the BIP and DLR models and updates the selection policy in each epoch depending on the patients' health behavior.  

In summary, our work differs from previous studies in three main ways. First, we propose a DLR model to predict the patient's long-term adherence, thus avoiding assigning patients to predefined trajectories. Second, our DLR model is updated every time new information is gathered from the patient. Third, we propose an adaptive selection rule to select patients for adherence-improving interventions that combine the BIP and DLR models. This rule selects multiple patients in a heterogeneous population and updates itself when new information is collected. We show that this adaptive selection rule reduces the number of CVD events in a population by helping identify which patients benefit the most from the interventions. 

\section{Methods} \label{sec4:methods}
This section describes our implementation of the DLR method for predicting patients' adherence and the BIP model for assigning patients to interventions. We present definitions and assumptions, describe the dynamic forecasting model with random effects, and present alternative patient selection rules for adherence-improving interventions. Further, we present a simulation model to test the forecast results. Table \ref{tab:notation4} presents the notation used throughout this paper, where vectors are highlighted in bold.

\begin{table}[htb!]
    \centering
    \caption{List of notation for the forecasting model.}
    \begin{tabular}{c|l}
    \hline
         Notation &  Description  \\
    \hline
         $\texttt{I}$ & Set of patients indexed by $i$ where $\texttt{I}\equiv\{1,...,n\}$.\\
         $\texttt{E}$ & Set of epochs indexed by $t$ where $\texttt{E}\equiv\{1,...,T\}$.\\
         $w_{it}$ & Binary response variable that represents if patient $i$ adheres or not at quarter $t$. \\
         $x_{ikt}$ & Covariate $k$, for patient $i$ in quarter $t$.\\
         $u_i$ & Random effect associated with patient $i$.\\
         $y_{it}$ & Probability that $w_{it}$ is equal to 1 given the covariates.\\
         $\hat{y}_{it}$ & Estimation of $y_{it}$.\\
         $\boldsymbol\beta_{t}$ & Vector of coefficients at time $t$ associated to variables.\\
         $\hat{\boldsymbol\beta}_{t}$ & Estimation of $\boldsymbol\beta_{t}$.\\
         $\Sigma_{\boldsymbol\beta t}$ & Correlation matrix at quarter $t$ for the coefficients $\boldsymbol\beta_t$.\\
         $q$ & Probability of success of an intervention.\\
         $r$ & Adherence effect on risk reduction.\\
         $P_{it\tau}$ & Probability that patient $i$ does not adhere for $\tau$ epochs until the end of the planning\\ & horizon if intervened in epoch $t$.\\
         $c$ & The number of available interventions at each epoch.\\
         $S_{it}$ & Binary variable where if $S_{it}=1$ then patient $i$ is selected for intervention in epoch $t$.\\         
     \end{tabular}
    \label{tab:notation4}
\end{table}

\subsection{Dynamic logistic regression} \label{sec4:dlrMod}

Let $\texttt{I}$ be the set of patients indexed by $i$ where $\texttt{I}\equiv\{1,...,n\}$ and $n$ is the total number of patients. Also, let $\texttt{E}$ be the set of epochs to forecast indexed by $t$ where $\texttt{E}\equiv\{1,...,T\}$ and $T$ is the total number of epochs. We define our epochs in quarters of the year, as the American College of Cardiology suggests that patients on statins take cholesterol tests every 3 to 12 months \cite{Grundy2018}. Additionally, statin medication is usually prescribed for 90 days at a time. We aim to predict whether patient $i$ adheres at quarter $t$; therefore, we let $w_{it}$ denote the binary response variable that represents if patient $i$ adheres or not at quarter $t$. We also define $x_{ikt}$ as the value of covariate $k$ for patient $i$ at quarter $t$. Finally, we assume we have prior data from the date of the first refill, denoted as quarter 1, until quarter $t-1$. We use this data to predict adherence for quarter $t$ onward. We begin by predicting adherence for quarter $t$ using a logistic regression model and then use this result as an input for the DLR model to forecast quarters $t+1$ onward. Then this process continues recursively until the final quarter $T$ as shown in Figure \ref{fig:dlr}. More data becomes available when patients return to an appointment or refill medication. Therefore, for a quarter $t+\tau$, if more data is available, we rerun the DLR model using data from quarters $1$ to $t+\tau$ and predict for quarters $t+\tau+1$ to $T$.

\begin{figure}[htb!]
\begin{center}
\caption{DLR flowchart. The DLR model initializes by using a logistic regression model to predict the patient adherence for quarter $t$. Then, the DLR model employs a recursive process using Newton's method to estimate the covariance matrix and the covariates for quarters $t+1$ to $T$. With the resulting covariance matrices and covariates, we predict the patient's adherence behavior from quarters $t$ to $T$.}
\includegraphics[scale=0.4]{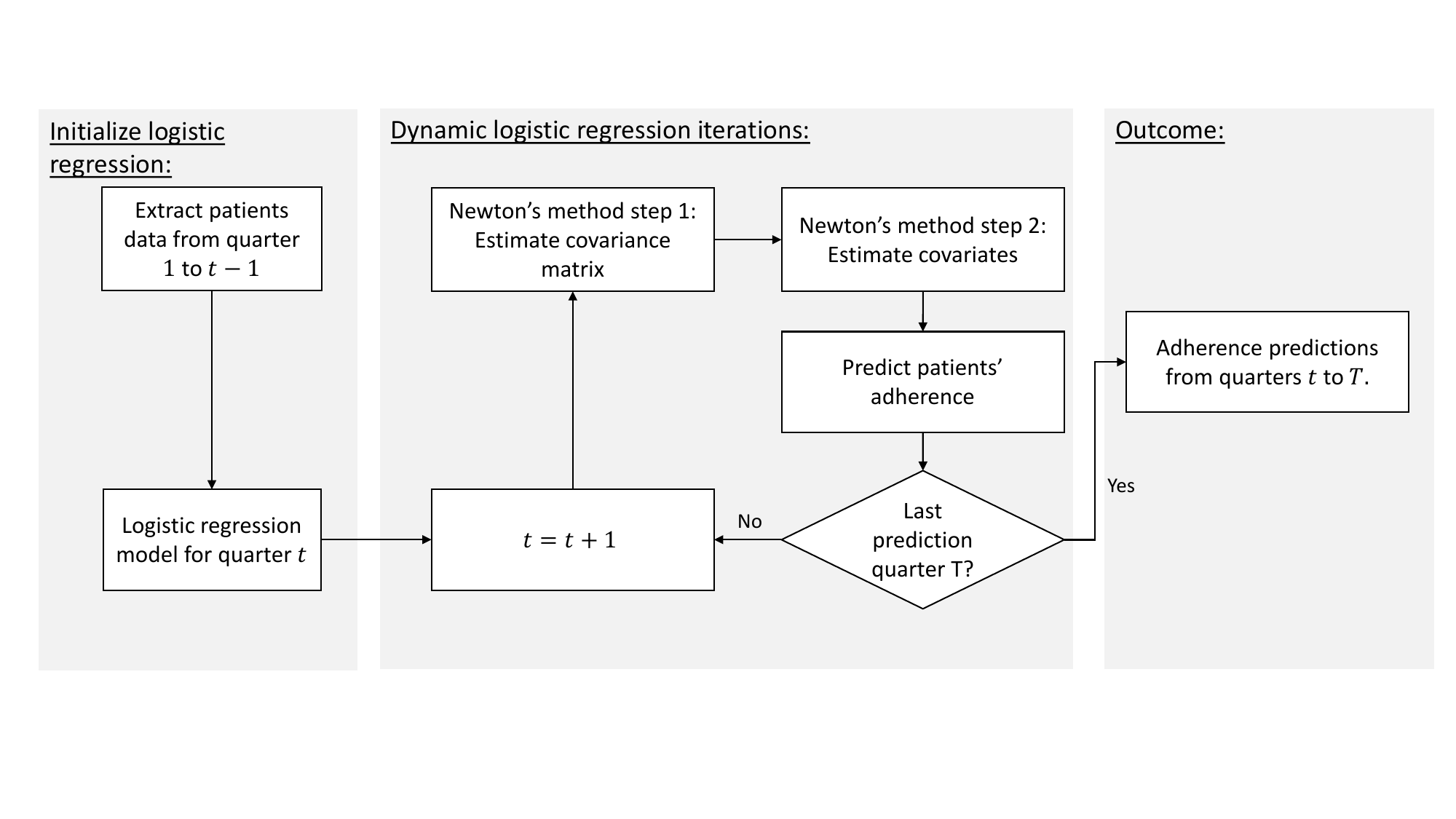}
\label{fig:dlr}
\end{center}
\end{figure}

As our goal is to estimate the probability that $w_{it}=1$, we define $y_{it}$ as the probability that $w_{it}$ is equal to 1 given the covariates. Let us define $\boldsymbol{x}_{i\cdot\cdot}$ as the vector containing the covariates history associated with patient $i$. Then, we estimate the probability $w_{it}$ is equal to 1 as   $y_{it}=P(w_{it}=1|\boldsymbol{x}_{i\cdot\cdot},\boldsymbol\beta_t)$, which is estimated by a logistic regression model, where $w_{it}=1$ represents that the patient does not adhere and $w_{it}=0$ otherwise. Notice that, for quarter $t$, $\boldsymbol{x}_{i\cdot\cdot}$ contains data from the first quarter to quarter $t-1$.  As each patient may behave differently, we define $u_i$ as the random effect associated with patient $i$. Therefore we estimate $\hat{y}_{it}$ as:

\begin{equation}\label{eqn:logit}
    \hat{y}_{it}=P(w_{it}=1|\boldsymbol{x}_{i\cdot\cdot},\boldsymbol\beta_t)=\frac{exp(u_i+\boldsymbol\beta_t \boldsymbol{x}_{i\cdot\cdot})}{1+exp(u_i+\boldsymbol\beta_t \boldsymbol{x}_{i\cdot\cdot})}.
\end{equation}

\noindent We use the data until quarter $t-1$ to forecast the adherence in $t$ by estimating the coefficients $\boldsymbol\beta_t$. The logistic regression model only predicts one quarter at a time. Therefore, we use the DLR model that starts from the standard logistic regression to iteratively generate predictions of the remainder of the time horizon $T$.  

Using the estimates of the coefficients $\hat{\boldsymbol\beta}_t$, we can forecast adherence for time $t+1$ by estimating $\boldsymbol\beta_{t+1}$. In the literature, there are several proposed approaches to estimate $\hat{\boldsymbol\beta}_{t+1}$ \cite{Mccormick2012}. In our case, we update the coefficients dynamically by defining the posterior distribution of $\boldsymbol\beta_t$, after observing the probability of non-adherence for each patient in  quarter $t$ (defined as vector $\boldsymbol y_{\cdot t}$) as follows:

\begin{equation}\label{eqn:betaup}
    p(\boldsymbol\beta_t|\boldsymbol y_{\cdot t})=p(\boldsymbol y_{\cdot t}|\boldsymbol\beta_t)p(\boldsymbol\beta_t|\boldsymbol y_{\cdot t-1}).
\end{equation}

Given the amount of data for each patient, we assume that the right-hand side of the equation is approximated by a normal distribution \cite{Penny1999}, where $\boldsymbol\beta_{t+1}|\boldsymbol y_{\cdot t} \sim N(\hat{\boldsymbol\beta}_{t}, \hat{\Sigma}_{\boldsymbol\beta_{t}})$. The vector $\hat{\boldsymbol\beta}_{t}$ is the estimation of the coefficients at the end of period $t$, and $\hat{\Sigma}_{\boldsymbol\beta_{t}}$ is the estimation of the associated covariance matrix. The exact estimation of these values is difficult, but using Newton's method, we can approximate $\hat{\boldsymbol\beta}_{t+1}$ \cite{Penny1999}:

\begin{equation}\label{eqn:betahat}
    \hat{\boldsymbol\beta}_{t+1}=\hat{\boldsymbol\beta}_{t}-\hat{\Sigma}_{\beta_{t+1}}^{-1}(\boldsymbol w_{\cdot t}-\hat{\boldsymbol y}_{\cdot t})\boldsymbol{x}_{\cdot\cdot t},
\end{equation}

\noindent where the covariance matrices are updated as follows:

\begin{equation}\label{eqn:covbeta}
    \hat{\Sigma}_{t+1}=\hat{\Sigma_t}^{-1}+\hat{\boldsymbol y}_{\cdot t}(1-\hat{\boldsymbol y}_{\cdot t})\boldsymbol{x}_{\cdot\cdot t}\boldsymbol{x}_{\cdot\cdot t}^T.
\end{equation}

\noindent Having estimated $\hat{\boldsymbol\beta}_{t+1}$, then we can estimate $\hat{y}_{it+1}$ using the updated logistic regression model. This process continues recursively until the last prediction quarter $T$ as shown in Figure \ref{fig:dlr}. 

\subsection{Patient selection model}

We formulate a BIP model to select patients for adherence-improving interventions. The model's objective is to maximize the total CVD risk reduced in the population. Given a limited intervention capacity per epoch, the objective is achieved by deciding which patients to intervene in each epoch. Therefore, in this model, we assume that: (1) the patient receives an intervention at most once over the planning horizon; if the intervention does not work, the patient is not selected again for an intervention, (2) the CVD risk is reduced if the patient is selected for intervention and the intervention works, and (3) the probability that the intervention works ($q$) and the adherence effect on relative risk reduction ($r$) are the same for all patients, where $r \in [0,1]$. 

The risk reduction per patient is estimated in terms of the initial 10-year risk for a CVD event ($CVD_i$). If a patient adheres, the new 10-year CVD risk will be $CVD_i\times(1-r)$ by the end of 1 epoch into the intervention. The DLR model outputs the probability $\hat{y}_{it}$, that patient $i$ does not adhere in epoch $t$. As there is a natural risk reduction for patients that adhere, assigning patients to interventions, who would otherwise have adhered, does not provide any additional reduction in risk. To estimate how much the CVD risk is reduced for multiple periods due to the intervention, we define $a_{it}$ as the total risk reduction throughout the planning horizon for patient $i$ if selected for intervention in epoch $t$. For example, if patient $i$ does not adhere, we let $q=1$. If that patient is selected to receive an intervention at epoch $t$, the risk after the intervention is $(1-r)^{T-t+1}CVD_i$ and $a_{it}=CVD_i-(1-r)^{T-t+1}CVD_i$. Nonetheless, as there is a probability that the patient adheres in any epoch $\tau \geq t$, estimated as $(1-\hat{y}_{i\tau})$, we incorporate this probability into estimating $a_{it}$. For example, let $t=T-2$ and $q=1$, then $a_{iT-1}=CVD_i-(1-\hat{y}_{iT-1})(1-\hat{y}_{iT})CVD_i-\hat{y}_{iT-1}(1-\hat{y}_{iT})(1-r)CVD_i-(1-\hat{y}_{iT-1})\hat{y}_{iT}(1-r)CVD_i-\hat{y}_{iT-1}\hat{y}_{iT}(1-r)^2CVD_i$. We define $P_{it\tau}$ as the probability that patient $i$ does not adhere for $\tau$ epochs until the end of the planning horizon if intervened in epoch $t$. The total expected risk reduced for patient $i$ is estimated as:

\begin{equation}\label{eqn:rewards}
a_{it}=CVD_i - q\sum_{\tau=0}^{T-t+1} P_{it\tau}(1-r)^{\tau}CVD_i.
\end{equation}

\noindent Finally, we define $S_{it}$ as a binary variable where $S_{it}=1$, if patient $i$ is selected for intervention in epoch $t$ and $S_{it}=0$ otherwise. Because the patient is only selected once for an intervention, we need to add the constraint $\sum_{t \in \texttt{E}} S_{it} \leq 1$. Given this constraint, the expected risk reduction for patient $i$ will be $\sum_{t\in \texttt{E}}a_{it}S_{it}$. As $a_{it}$ represents the expected CVD risk reduced, it provides an estimate of the reduction of the probability of having a CVD event in the next ten years. Therefore, $\sum_{t\in \texttt{E}}a_{it}S_{it}$ also represents the decreased number of expected CVD events for patient $i$ over the planning horizon $T$. We can now formulate the problem as a BIP model to maximize the expected total reduction of CVD events in the population, as follows:

      \begin{alignat}{3} \label{eqn:objFunction}
     \mbox{maximize } &\sum_{t\in \texttt{E}}\sum_{i \in \texttt{I}}  a_{it}S_{it} & \\ 
     \mbox{subject to } &  & \nonumber \\ \label{eqn:capConst}
     & \sum_{i \in \texttt{I}} S_{it} \leq c \quad \forall t \in \texttt{E}, & \\ \label{eqn:oneConst}
     & \sum_{t \in \texttt{E}} S_{it} \leq 1 \quad \forall i \in \texttt{I}, & \\  \label{eqn:varNature}
     & S_{it} \in \{0,1\} \quad \forall i \in \texttt{I}, \quad \forall t \in \texttt{E}.& 
     \end{alignat}

\noindent The constraint in Equation \ref{eqn:capConst} is associated with the intervention capacity per epoch, and Equation \ref{eqn:oneConst} enforces that patients are selected once. Recall that $a_{it}$ is estimated using the intervention and non-adherence probability. This BIP is a special case of the multiple knapsack problem, where finding the optimal solution is, in general, NP-hard \cite{Ilhan2010}. However, we show the instances we are concerned with are easy to solve, 

We want to understand the additional reduction of CVD events when selecting a patient in epoch $t$ versus waiting. Having estimated this difference, we want to define a policy that selects patients based on marginal risk reduction between epochs. Therefore, in Proposition \ref{prop:order}, we show that $a_{it}$ decreases in time and how to estimate $a_{it}$ and $a_{it-1}$. 

\begin{proposition} \label{prop:order}
For patient $i$, $a_{i1} \geq a_{i2} \geq \cdots \geq a_{iT}$. Furthermore, $a_{it-1}-a_{it}=r\hat{y}_{i(t-1)}(CVD_i-a_{it})$.
\end{proposition}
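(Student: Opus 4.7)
The plan is to establish the recursive identity first, since the monotonicity chain follows immediately from it together with nonnegativity of $r$, $\hat{y}_{i(t-1)}$, and $CVD_i - a_{it}$. The key observation is that the defining sum for $a_{it}$ can be rewritten as
\begin{equation*}
a_{it} = CVD_i - q \cdot \mathbb{E}_i\!\left[(1-r)^{N_{it}}\right] \cdot CVD_i,
\end{equation*}
where $N_{it}$ denotes the number of non-adherence epochs over $\{t, t+1, \ldots, T\}$ under the DLR-forecasted probabilities $\hat{y}_{i\tau}$ treated as independent Bernoulli trials. This equivalence is just a regrouping: $P_{it\tau}$ is by definition the total probability that exactly $\tau$ of those $T-t+1$ epochs result in non-adherence, so the sum over $\tau$ coincides with the expectation of $(1-r)^{N_{it}}$. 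It is consistent with the example worked out just before the proposition at $t = T-1$.

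Next I would decompose $N_{i(t-1)} = Z_{t-1} + N_{it}$, where $Z_{t-1}$ is the Bernoulli indicator for non-adherence in epoch $t-1$, independent of the epochs $t, \ldots, T$ under the model. Using the multiplicative property of exponentials applied to independent summands,
\begin{equation*}
\mathbb{E}_i\!\left[(1-r)^{N_{i(t-1)}}\right] = \mathbb{E}_i\!\left[(1-r)^{Z_{t-1}}\right] \cdot \mathbb{E}_i\!\left[(1-r)^{N_{it}}\right] = \bigl(1 - r\hat{y}_{i(t-1)}\bigr)\cdot \mathbb{E}_i\!\left[(1-r)^{N_{it}}\right],
\end{equation*}
since $\mathbb{E}_i[(1-r)^{Z_{t-1}}] = (1-\hat{y}_{i(t-1)}) + \hat{y}_{i(t-1)}(1-r) = 1 - r\hat{y}_{i(t-1)}$.

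Plugging this into the expression for $a_{i(t-1)}$ and subtracting $a_{it}$ yields, after one line of algebra,
\begin{equation*}
a_{i(t-1)} - a_{it} = r\hat{y}_{i(t-1)} \cdot q\,\mathbb{E}_i\!\left[(1-r)^{N_{it}}\right] CVD_i = r\hat{y}_{i(t-1)}\bigl(CVD_i - a_{it}\bigr),
\end{equation*}
using the first display to rewrite $q\,\mathbb{E}_i[(1-r)^{N_{it}}] CVD_i = CVD_i - a_{it}$. The monotonicity $a_{i1} \geq a_{i2} \geq \cdots \geq a_{iT}$ then follows at once: each factor $r$, $\hat{y}_{i(t-1)}$, $q$, and $\mathbb{E}_i[(1-r)^{N_{it}}] CVD_i$ lies in $[0, CVD_i]$, so the right-hand side of the recursion is nonnegative.

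The main obstacle I anticipate is pinning down the interpretation of $P_{it\tau}$ as the distribution of a sum of independent Bernoulli indicators with parameters $\hat{y}_{i\tau}$—this is implicit in the paper but never stated explicitly. Once that independence assumption is in hand (and it is validated by the worked example at $t = T-1$), the rest of the proof is a short and mechanical algebraic manipulation, so no subtle estimation step is required.
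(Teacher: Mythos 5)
Your proof is correct, and it reaches the identity by a genuinely cleaner route than the paper's. The paper works directly with the recurrences $P_{i(t-1)0}=(1-\hat{y}_{i(t-1)})P_{it0}$, $P_{i(t-1)\tau}=(1-\hat{y}_{i(t-1)})P_{it\tau}+\hat{y}_{i(t-1)}P_{it(\tau-1)}$, and $P_{i(t-1)(T-t+2)}=\hat{y}_{i(t-1)}P_{it(T-t+1)}$, substitutes them into the difference of the two defining sums, and then telescopes the resulting expression term by term to extract the common factor $r\hat{y}_{i(t-1)}$. Your observation that $\sum_\tau P_{it\tau}(1-r)^\tau=\mathbb{E}_i[(1-r)^{N_{it}}]$ is a probability generating function evaluated at $1-r$, combined with the factorization $\mathbb{E}_i[(1-r)^{Z_{t-1}+N_{it}}]=(1-r\hat{y}_{i(t-1)})\,\mathbb{E}_i[(1-r)^{N_{it}}]$, makes the telescoping automatic: the paper's three-case recurrence is exactly the convolution identity that your independence decomposition encodes, so the two arguments rest on the identical (implicit) modeling assumption that non-adherence events across epochs are independent Bernoulli trials with parameters $\hat{y}_{i\tau}$ -- an assumption the paper never states but which, as you note, is confirmed by the worked example at $t=T-1$. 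What your approach buys is a two-line derivation that makes the probabilistic structure explicit and sidesteps the index bookkeeping at the boundary terms $\tau=0$ and $\tau=T-t+2$; what the paper's approach buys is that it never needs to commit to the independence interpretation as a separate assumption, since the recurrences are taken as the definition of how $P_{i(t-1)\tau}$ relates to $P_{it\tau}$. One small imprecision in your final step: $r$, $\hat{y}_{i(t-1)}$, and $q$ lie in $[0,1]$ rather than $[0,CVD_i]$, but all that is needed for monotonicity is that every factor in $r\hat{y}_{i(t-1)}(CVD_i-a_{it})$ is nonnegative, which your identity $CVD_i-a_{it}=q\,\mathbb{E}_i[(1-r)^{N_{it}}]\,CVD_i\geq 0$ delivers.
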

\begin{proof}:
We prove this proposition by estimating the difference between $a_{it-1}$ and $a_{it}$. Then we show that $a_{it-1} - a_{it} \geq 0$ for all $t \leq T$. For the complete proof, refer to Appendix \ref{app:prop}.
\end{proof}
 
We use Proposition \ref{prop:order} to prove Theorem \ref{theo:meansol}, which shows that the optimal policy from the BIP model is obtained by ranking patients in each epoch $t$ by $a_{it}-a_{it+1}$. This result means that patients with the highest marginal risk reduction of CVD events between epoch $t$ and $t+1$ should be prioritized first.

\begin{theorem} \label{theo:meansol}
Let $z$ be the optimal value of the BIP presented in Equations 6 to 9. If $cT < n$, then,

\[z=\sum_{i=1}^ca_{i1}+\sum_{i=c+1}^{2c}a_{i2}+\cdots+\sum_{i=(T-1)c+1}^{Tc}a_{iT}.\]

If $n \leq cT$, then let $\gamma$ be a positive integer such that $ (\gamma-1)c \leq n \leq \gamma c$. Then, 

\[z=\sum_{i=1}^ca_{i1}+\sum_{i=c+1}^{2c}a_{i2}+\cdots+\sum_{i=(\gamma-1)c+1}^nca_{i\gamma}.\]

Where patients are sorted as follows: For any particular epoch $t<T$, patient $i$ is prioritized over patient $j$ if $a_{it}-a_{it+1}\geq a_{jt}-a_{jt+1}$. For epoch $t=T$, patient $i$ is prioritized over patient $j$ if $a_{iT}\geq a_{jT}$.
\end{theorem}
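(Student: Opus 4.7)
My plan is to attack Theorem \ref{theo:meansol} by combining two structural reductions with a pairwise exchange argument and an induction on $T$. I would first establish the ``shape'' of any optimal solution, then enforce the stated priority rule at the level of adjacent epochs, and finally close the induction to obtain the explicit closed form for $z$.

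I would first exploit Proposition \ref{prop:order} together with the non-negativity $a_{it}\geq 0$ (which follows from $q,r\in[0,1]$ and $\sum_\tau P_{it\tau}(1-r)^\tau\leq 1$) to argue that any optimal $S^*$ selects $\min(cT,n)$ patients and fills the epochs in order, with the last occupied epoch being $\gamma=\min(\lceil n/c\rceil,T)$. The reason is that if any earlier epoch has an empty slot while some later epoch is occupied (or while some patient is unassigned in the $n\leq cT$ regime), moving that patient into the empty earlier slot weakly improves the objective by $a_{i,\text{early}}-a_{i,\text{late}}\geq 0$ using the non-increasingness of $a_{it}$ in $t$, or by $a_{it}\geq 0$ if the moved patient was previously unassigned. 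This pins down the shape of $S^*$ to exactly match the indices appearing in the claimed closed form for $z$.

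Next, the key workhorse is the pairwise exchange. If $S^*$ assigns patient $i$ to epoch $t$ and patient $j$ to epoch $t+1$, swapping them changes the objective by
\begin{equation*}
(a_{jt}+a_{i,t+1})-(a_{it}+a_{j,t+1}) \;=\; (a_{jt}-a_{j,t+1})-(a_{it}-a_{i,t+1}),
\end{equation*}
which must be nonpositive for $S^*$ to be optimal, yielding $a_{it}-a_{i,t+1}\geq a_{jt}-a_{j,t+1}$, precisely the theorem's priority rule for $t<T$. An analogous calculation at $t=T$ against an unoccupied or unused slot (with the convention $a_{i,T+1}=0$) produces the rule $a_{iT}\geq a_{jT}$. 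Once these pairwise inequalities hold for every consecutive pair, I would run a bubble-sort-style sequence of adjacent swaps to transform any optimal $S^*$ into the stated layered form, and then strong-induct on $T$: the base $T=1$ is immediate, and the inductive step pins down the epoch-$1$ assignment to be the top $c$ patients by $a_{i1}-a_{i2}$, reducing the residual problem to an identical BIP with horizon $T-1$ on the remaining patients, to which the inductive hypothesis applies and yields the stacked closed form for both the $cT<n$ and $n\leq cT$ regimes.

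The hardest step, I expect, is translating adjacent-pair optimality into the full greedy characterization when $n>cT$ and some greedy-preferred patient $i$ does not appear anywhere in $S^*$. Adjacent exchange alone does not directly handle inserting such a patient in place of an unneeded one while simultaneously evicting the latter from the selected set; chaining swaps along several consecutive epochs and verifying that the net effect is non-negative will be needed. The recursion $a_{i,t-1}-a_{it}=r\hat{y}_{i,t-1}(CVD_i-a_{it})$ from Proposition \ref{prop:order} should help control the cumulative differences along such a chain, since it expresses each drop as a product of a non-negative $\hat{y}_{i,t-1}$ factor and the non-negative gap $CVD_i-a_{it}$, which is itself monotone in $t$.
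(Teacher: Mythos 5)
Your core mechanism --- a swap of patients between adjacent epochs whose effect on the objective is $(a_{jt}-a_{j,t+1})-(a_{it}-a_{i,t+1})$ --- is exactly the quantity $\delta$ computed in the paper's proof, so at heart the two arguments are the same exchange argument. The packaging differs: the paper embeds the BIP in a Markov decision process and runs a backward induction from epoch $T$, so that at each stage the continuation value is already available in closed form and every deviation reduces to a single two-term comparison; you work forward, fixing epoch $1$ and recursing on a horizon-$(T-1)$ subproblem, after first pinning down the ``shape'' of an optimal solution via non-negativity of $a_{it}$ and the monotonicity from Proposition \ref{prop:order}. Your explicit shape lemma is a genuine addition --- the paper never argues why optimal solutions fill epochs in order or why exactly $\min(cT,n)$ patients are served --- and your bubble-sort normalization makes precise what the paper compresses into ``repeating the steps for $t=1$ to $T-2$.''

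The gap is the one you flag yourself, and it is real rather than merely laborious. Adjacent-pair exchange gives $a_{it}-a_{i,t+1}\geq a_{jt}-a_{j,t+1}$ only when $i$ occupies epoch $t$ and $j$ occupies epoch $t+1$. The theorem's priority rule requires each epoch-$t$ patient to dominate, in the one-step difference $a_{\cdot t}-a_{\cdot,t+1}$, every patient assigned to any later epoch and (when $n>cT$) every unassigned patient. A direct swap with a patient in epoch $t+k$ yields a condition on the $k$-step difference $a_{\cdot t}-a_{\cdot,t+k}$, and chaining adjacent swaps transitively compares $i$ only against the specific occupants of the intermediate epochs; neither implies the required one-step inequality for an arbitrary later-assigned or unassigned $j$, since $a_{it}-a_{i,t+k}\geq a_{jt}-a_{j,t+k}$ does not decompose into the corresponding one-step inequalities. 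So ``bubble sort plus induction'' does not close by itself: you must either exploit the multiplicative recursion $a_{i,t-1}-a_{it}=r\hat y_{i,t-1}(CVD_i-a_{it})$ to control how the per-epoch differences of distinct patients interleave across time, or adopt the backward-induction framing in which the tail is already solved and the only deviations left to rule out are single exchanges at the current epoch. As written, your proposal defers exactly the step on which the theorem's validity turns.
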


\begin{proof}:
We prove the theorem by induction. We define a Markov decision process (MDP) model that represents the BIP formulation. For each epoch $t$, we use the MDP model to prove that the optimal value is reached when the patients are sorted by $a_{it}-a_{it+1}$ for all epochs $t$ to $T-1$. For epoch $T$, the patients are sorted by $a_{iT}$. For the complete proof, refer to Appendix \ref{app:theo}.
\end{proof}

The result of Theorem \ref{theo:meansol} yields an easy-to-apply policy based on ranking and selecting the patients from highest to lowest total CVD event reduction at each epoch. We will refer to this solution as the \textit{BIP policy}. It is important to note that this policy implies the decision-maker makes decisions at epoch 1, assuming no changes in the relative ordering of patients at each future epoch. This may or may not be true depending on observations over the course of the time horizon. In the next section, we propose an adaptive selection rule motivated by the BIP policy that uses the DLR model of Section \ref{sec4:dlrMod}. Additionally, we introduce alternative patient-intervention assignment decision rules with which we will compare our adaptive selection rule. 

\subsection{Patient-intervention decision rules}

In this section, we propose two rules motivated by the BIP policy and the DLR model and compare their results versus a standard rule used in practice. The \textit{Standard rule} represents a plausible approach used by healthcare providers in which they prioritize patients with higher risk first. The first rule that we propose is to prioritize patients using the BIP policy without DLR (\textit{BIP rule}), as discussed in the previous section. The second rule we propose is  prioritizing using the BIP policy and updating the adherence probabilities using the DLR model (\textit{BIP-DLR rule}). For each of these rules, we assume that in each epoch, we have access to $CVD_{it}$, defined as the CVD risk for patient $i$ in epoch $t$ as seen in the data.

We first start describing how the standard rule operates. From the pool of patients, we only consider patients for selection if their PDC is lower than $80\%$ at a given epoch and they are not on intervention. Because there is a finite capacity for the intervention, we sort patients by CVD risk. After each intervention, we update each patient's adherence depending on: (1) if the patient was intervened or not, (2) the intervention worked, and (3) the pharmacy claims data between epochs. We present the rule in Algorithm \ref{algo:int1}.

\begin{algorithm}
  \caption{Standard rule}\label{algo:int1}
  \begin{algorithmic}[1]
    \Procedure{Assignment}{}
      \State $\text{Patients} \gets \text{Patients information: adherence and CVD risk}$  
      \For{t=1 to T}
          \State $\text{Patients} = Sort(\text{Patients}) \gets \text{Sort non-adherent patients by CVD risk (decreasing)}$
          \State $int=0$
       \For{i=1 to P}
            \If {$int<c$}
                \State $S_{it}=1$ 
                \State $int=int+1$
            \Else
                \State $S_{it}=0$ 
            \EndIf
        \EndFor
        \State $\text{Patients}=Update(\text{Patients},t,S_{..}) \gets \text{Update patients adherence and risk}$  
      \EndFor
    \EndProcedure
  \end{algorithmic}
\end{algorithm}

For the first proposed rule, the BIP rule, we run the logistic regression once to estimate the adherence for future epochs 1 to T without applying the updating of the DLR model. This regression, without the updating, outputs the probabilities that will be used in the BIP formulation. Therefore we choose the patients with the highest $a_{it}-a_{it+1}$ for each epoch $t$, estimated as $r\hat{y}_{i(t-1)}(CVD_i-a_{it})$, as shown in Proposition 1. Notice that we use $CVD_{i1}$, as we only have access to the current CVD risk when we run the BIP model. Also, $\hat{y}_{it}$ represents the output probabilities from the standard regression model based on historical data. We present this rule in Algorithm \ref{algo:int2}.

\begin{algorithm}
  \caption{BIP rule}\label{algo:int2}
  \begin{algorithmic}[1]
    \Procedure{Assignment}{}
      \State $\text{Patients} \gets \text{Patients information: adherence and CVD risk}$  
      \State $\text{Prediction}=LR(\text{Patients},t)\gets \text{Logistic regression model output using historical data}$
      \State $A=\{a_{it}\} \gets \text{Estimate $a_{it}$ for all patients and for all $1\leq t \leq T$}$
      \State $\text{Patients}=Update(Patients,A) \gets \text{Update the patients information with A}$
      \For{t=1 to T}
          \State $\text{Patients} = Sort(\text{Patients}) \gets \text{Sort patients by $a_{it}-a_{it+1}$(decreasing)}$
          \State $int=0$
           \For{i=1 to P}
                \If {$int<c$}
                    \State $S_{it}=1$ 
                    \State $int=int+1$
                \Else
                    \State $S_{it}=0$ 
                \EndIf
            \EndFor
        \State $\text{Patients}=Update(\text{Patients},t,S_{..}) \gets \text{Update patients adherence and risk}$  
      \EndFor
    \EndProcedure
  \end{algorithmic}
\end{algorithm}

Finally, the BIP-DLR rule consists of an adaptive selection rule that runs the DLR model to update adherence probabilities at each epoch and then applies the policy obtained from Theorem 1. In this algorithm, we re-estimate $a_{it}$ after updating the adherence probabilities with the DLR model at each epoch. Notice that at each step $t$ we only need to update the coefficients from epoch $t$ to $T$. We present the rule in Algorithm \ref{algo:int3}.

\begin{algorithm}
  \caption{BIP-DLR rule}\label{algo:int3}
  \begin{algorithmic}[1]
    \Procedure{Assignment}{}
      \State $\text{Patients} \gets \text{Patients information: adherence and CVD risk}$  
      \For{t=1 to T}
          \State $\text{Prediction}=DLR(\text{Patients},t)\gets \text{DLR output using data until t-1}$
          \State $A_t=\{a_{i\tau}\} \gets \text{Estimate $a_{i\tau}$ for all patients and for all $t\leq \tau \leq T$}$
          \State $\text{Patients}=Update(Patients,A_t) \gets \text{Update the patients information with $A_t$}$
          \State $\text{Patients} = Sort(\text{Patients}) \gets \text{Sort patients by $a_{it}-a_{it+1}$ (decreasing)}$
          \State $int=0$
           \For{i=1 to P}
                \If {$int<c$}
                    \State $S_{it}=1$ 
                    \State $int=int+1$
                \Else
                    \State $S_{it}=0$ 
                \EndIf
            \EndFor
        \State $\text{Patients}=Update(\text{Patients},t,S_{..}) \gets \text{Update patients adherence and risk}$  
      \EndFor
    \EndProcedure
  \end{algorithmic}
\end{algorithm}

We next introduce the simulation model used to test the three intervention rules. 

\newpage

\subsection{Simulation model}

We developed a simulation model to assess the potential benefits of using the BIP-DLR rule to assign a limited number of patients to an adherence-improving intervention based on a hypothetical budget constraint. To initialize the simulation, we define year $t$ as a clinical decision maker's decision point. We divided the data into two sets, before $t$ (the past) and $t$ onward (the future). The data set before year $t$ is used to initialize a logistic regression model. We update the DLR model to predict future adherence at each future time period. We compare alternative resource allocation algorithms (described in the previous section) to assign limited interventions to patients using factors that include future predicted adherence. We continue with the assumptions presented regarding how the intervention works for this model. Nevertheless, we relax the assumption that the patient is only selected once for the intervention. For the simulation model, we assume that patients not selected for intervention or if previous interventions did not work will be available to be selected again for future periods with the same success probability $q$. This relaxation is given such that patients have future opportunities to be included in an intervention again. On the other hand, patients with a successful intervention will not be selected again, as they are assumed to adhere to medications. We continue this iterative process until the last simulated epoch $T$. This relaxation is included within the three rules presented in the previous section, where the pool of patients to select from includes patients that have not been selected for intervention or whose previous interventions did not work. We present the simulation flowchart in Figure \ref{fig:simu}.

\begin{figure}[htb!]
\begin{center}
\caption{Simulation flow. The model simulates how the patient's adherence and CVD risk behave depending on whether or not the patient is assigned to an intervention.}
\includegraphics[scale=0.45]{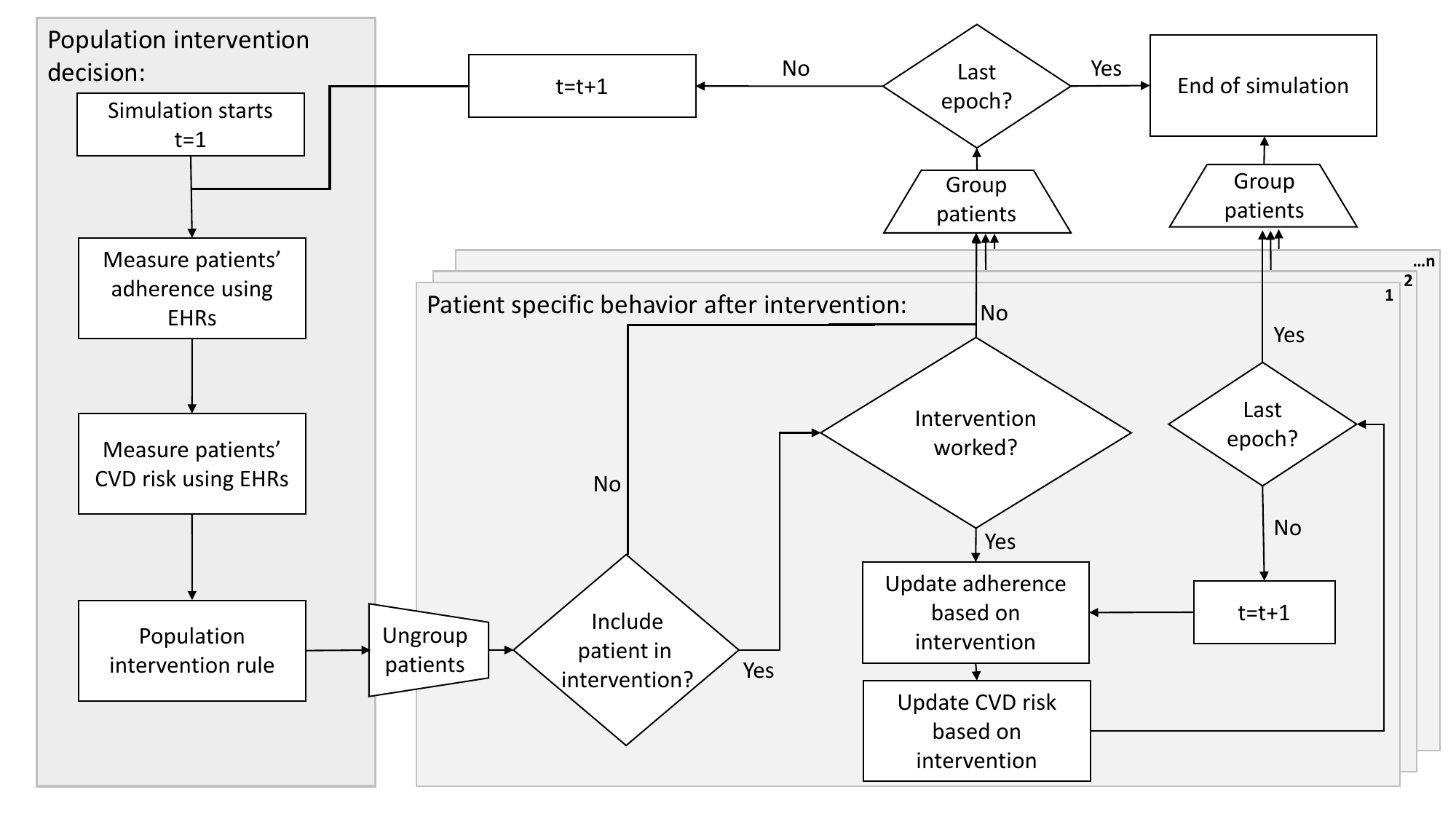}
\label{fig:simu}
\end{center}
\end{figure}

 After selecting patients for intervention and updating the adherence if the intervention is successful, the epoch is updated to $t+1$. We divide the EHR and claims data up to $t+1$ and after $t+1$, with the difference that we do not include patients that had a successful intervention. We simulate a 5-year time horizon, where each year is considered an epoch. At each iteration, time advances by one year, where we update the patients' risk using the EHR data if the patient is not on an intervention or update the risk using the risk reduction parameter if the patient is on an intervention and the intervention worked. To test the effectiveness of the intervention rules, we estimate the reduction in the number of CVD events after the simulated intervention horizon. To estimate the number of events, we calculate the absolute 10-year CVD risk for each patient after the simulation and the number of CVD events after ten years. We compare the model's results versus when no adherence-improving intervention exists.

\section{Results} \label{sec4:results}

This section presents the case study, the results of the DLR model, the patient-intervention decision rules, and the simulation model. We initialize the DLR model by estimating a standard logistic regression model on a training set to calibrate our model, then estimate the prediction for the studied time horizon, and finally simulate the patients' CVD risk with and without the prediction.  

\subsection{Case study}

We used a longitudinal data set of randomly selected patients seen in the national Veterans Affairs (VA) health system between 2003 and 2018. The data set was based on a cohort of 10,000 patients sampled from the national VA population. We included patients with at least one year on statins, without diabetes, and without any previously diagnosed heart disease or related conditions. Using these selection criteria yielded 3753 patients with data between 2003 to 2018, between the ages of 40 to 80 years old. 

Covariates included demographics, medical health factors, blood pressure and cholesterol levels, and observation dates. Within demographics, we consider sex, race, smoking habits, and age. The smoking status was assumed to be constant throughout the study, as we only had access to the patient's smoking status at the beginning of the study. We considered low-density lipoprotein cholesterol (LDL), total cholesterol, and systolic blood pressure (SBP) as medical health factors. We considered the number and dates of cholesterol tests and blood pressure tests. Finally, we used pharmacy claims data (start date, pill quantity, and refill dates) to compute the PDC, estimated as the percentage of the number of days with medication over the total number of days between refills quarterly \cite{Burnier2018}. We estimated the PDC quarterly as the American College of Cardiology suggests that patients on treatment be seen at least every three months\cite{Grundy2018}. Using a common assumption in the literature, we defined a patient as adherent in a given quarter if their PDC was greater than or equal to 80\%; otherwise, they were considered non-adherent \cite{Burnier2018}. 

\begin{table}[htbp]
    \centering
        \caption{Baseline characteristics of the population. We divide the patients based on their lifetime adherence behavior.}
    \begin{tabular}{lccc}
         \hline
         &  \multicolumn{2}{c}{Mean ($\pm$ SD) or No.(\%)} &\\\cline{2-3}
         Characteristic & Non-adherent patients & Adherent patients & P\\
         \hline
        n & 1569& 2095&\\
        Sex & & & 0.001 \\
        \quad Male & 1430 (91.1\%) & 1973(94.2\%) & \\
        \quad Female &     139 (8.9\%) &    122 (5.8\%) &  \\     
        Race & & & $<$0.001\\
        \quad White & 1196 (76.2\%) & 1842 (87.9\%) & \\
        \quad Black & 373 (23.8\%) & 253 (12.1\%) & \\ Smoking Habits & & & 0.026\\ 
        \quad Smoker & 372 (23.7\%) &  431 (20.6\%) &\\
        \quad Non-smoker & 1197 (76.3\%) &  1664 (79.4\%) &\\
        Age & 59.22 (11.76) & 63.84 (11.05) & $<$0.001\\    
        Percentage Days Cover & 63\%  (15\%) & 90\% (5\%) &  0.79\\
        Blood Pressure & & &\\
        \quad SBP & 129.90 (9.71) & 128.99 (8.72) & 0.003\\     
        \quad Number of tests & 34.03 (33.54) & 42.40 (47.16) & $<$0.001\\
       Cholesterol & & &\\
       \quad LDL & 123.06 (28.76) & 103.54 (23.60) &   $<$0.001\\     
       \quad Total Cholesterol & 199.09 (33.13) & 177.08 (29.92) &  $<$0.001\\     
       \quad Number of tests & 10.99 (7.11) & 14.22 (7.25) & $<$0.001\\ 
  \hline
    \end{tabular}
    \label{tab:tabone}\\

\end{table}

We present an overview of the study population in Table \ref{tab:tabone}. Most of the population is comprised of white male patients (70\% of the population), followed by black men (20\%), then white women (6.5\%), and finally, black women (2\%). Within the data, other races represent less than 1\% of the data points and are not differentiated. For this reason, we focused on the four patient groups mentioned before. The sex, race, and smoking habits data represent the number of patients with each characteristic. For age, PDC over the time horizon, blood pressure, and cholesterol, we estimated the mean and standard deviation of the population for adherent and non-adherent patients. Finally, the number of cholesterol and blood pressure tests represents the average per patient of the number of tests they take throughout the study period of 5 years. In summary, Table \ref{tab:tabone} shows that non-adherent patients tend to be younger, have fewer cholesterol and blood pressure tests, and have higher LDL and total cholesterol.    

\subsection{Logistic regression initialization and covariates}

The initial logistic regression at the start of the time horizon comprises three types of covariates: random effects, categorical covariates, and numerical covariates. The random effects are represented within the intercept term, which is significant regarding the probability that a patient does not adhere to medication. This denoted intrinsic behaviors regarding each patient, which are not evaluated by any other variables. Regarding past adherence, the study includes patients with at least one year on medication. Therefore, there is always enough history to predict future adherence.

\begin{table}[htbp]
    \centering
        \caption{Coefficients for the logistic regression for historical data. We also present the variance inflation factors (VIF) to test for multicollinearity.}
    \begin{tabular}{lcccc}
         \hline
         Covariate & Estimate & Std. Error & P-value & VIF\\
         \hline
        Intercept & 3.449 & 0.545 & $<$0.001* & \\
        Sex & -0.285 & 0.195 & 0.144 & 1.04\\
        Race & -0.324 & 0.121 & 0.013* & 1.03\\
        Smoking habits & 0.062 & 0.100 & 0.541 & 1.06\\
        Age & $4.22\times 10^{-3}$ & $3.91 \times 10^{-3}$  & 0.280 & 1.24\\    
        Blood Pressure & & & &\\
        \quad SBP & $7.69 \times 10^{-5}$ & $2.77 \times 10^{-3}$ & 0.978 & 1.03\\     
        \quad Number of tests & 0.105 & 0.033 & 0.001* & 1.06\\
       Cholesterol & & & &\\
       \quad LDL & $5.55 \times 10^{-3}$ & $2.42 \times 10^{-3}$ & 0.022* & 4.22\\     
       \quad Total Cholesterol & $1.37 \times 10^{-3}$ & $2.05 \times 10^{-3}$ &  0.51 & 4.29\\     
       \quad Number of tests & -0.093 & 0.071 & 0.191 & 1.14\\
       Percentage days cover & & & &\\
       \quad 1st Quarter & -0.755 &0.136 &$<$0.001* & 2.71\\
       \quad 2nd Quarter & -0.707 & 0.149 &$<$0.001* & 3.06\\
       \quad 3rd Quarter & -0.190 & 0.149 & 0.204 & 3.11\\
       \quad 4th Quarter & -0.420& 0.152 & 0.006* & 3.09\\
       \quad 5th Quarter & -0.296&0.151 &0.049* & 3.02\\
       \quad 6th Quarter & -0.091&0.148 &0.541 & 2.94\\
       \quad 7th Quarter & -0.221& 0.146&0.130 & 2.78\\
       \quad 8th Quarter & -0.033&0.133 &0.807 & 2.30\\
       
  \hline
    \end{tabular}
    \label{tab:baseline4}\\

\end{table}

Table \ref{tab:baseline4} presents the coefficients of the logistic regression model. Additionally, we calculated the variance inflation factors (VIF) to test for multicollinearity. We noticed that cholesterol LDL and total cholesterol have a VIF greater than 4, and the PDC also has larger VIF values, between 2.3 and 3.1. We further estimated the correlation between the coefficients (see Figure \ref{fig:corr} in Appendix \ref{app:graphs}). The high VIF values for LDL and total cholesterol are due to the high correlation (0.87) between these covariates. This correlation is explained as the total cholesterol is estimated from LDL via Friedewald's equation \cite{Friedewald1972}. On the other hand, the high correlation between the different quarters of PDC shows an autocorrelation behavior for adherence, between 0.57 and 0.74. Notably, the correlation decreases with respect to time between PDC estimates, indicating that the value of PDC history decreases over time. We can look at these results by studying which covariates are significant to forecasting future adherence.

Regarding the categorical variables, Table \ref{tab:baseline4} shows race is the only variable that affects the probability of not adhering. From our analysis, we note that black patients are associated with lower adherence to statins which has been observed in other studies as well \cite{Statistics2018} \cite{Kenik2014} \cite{Thomson2019}.

Other statistically significant numerical covariates (based on a p-value threshold of 0.05) are the number of blood pressure tests from the beginning of the study until the current refill date, the LDL, and past percentage days covered (PDC) were significant variables related to not adhering to medication. There is a positive correlation between a higher LDL with non-adhering behavior. On the other hand, when patients take more blood pressure tests, they are more likely to stop adhering. Previous studies suggest this may be related to patients taking less medication when their test results show that their health has improved \cite{Burnier2018}. Finally, we noticed that the patient's PDC behavior from the last year and a half dramatically affects the probability of not adhering, whereas, at a high previous PDC, the probability of not adhering to medication decreases compared to a lower previous PDC. It is worth noting that prior PDC from more than a year and a half in the past has less effect on present adherence. This result suggests that a moving time window with a year and a half of prior history is sufficient to predict the likelihood that a patient will stop adhering.

\subsection{Dynamic logistic regression}

After estimating the initial logistic regression covariates, we run the DLR model to forecast the probability that patients do not adhere over the next five years. We needed sufficient data to include the patients' random effects in the model. From the data set, which comprises data between 2003 to 2018, we assigned observational data from 2009 and before to be the training set. Therefore, the testing set consisted of data from 2010 onwards, so we have sufficient data for the 5-year rolling time horizon. We define a patient as not adhering if, at least for two quarters within a particular year, the patient does not adhere. Additionally, we test the DLR model using different values for the adherence threshold. We test the model using the base case threshold of $80\%$ for adherence and estimate the AUC, as shown in Figure \ref{fig:roc}. We perform sensitivity analysis with respect to this base case, as shown in Appendix \ref{app:graphs} Figure \ref{fig:rocall}. We performed a 3-fold cross-validation to validate the results, dividing the patients into three groups and estimating the average AUC. We present these results as the healthcare provider may use a different threshold based on the evaluated panel \cite{Burnier2018}.  

\begin{figure}[htbp!]
\begin{center}
\caption{ROC and AUC curves. We present the ROC and AUC for each of the five forecasting years for an adherence threshold of 0.8}
\includegraphics[scale=0.28]{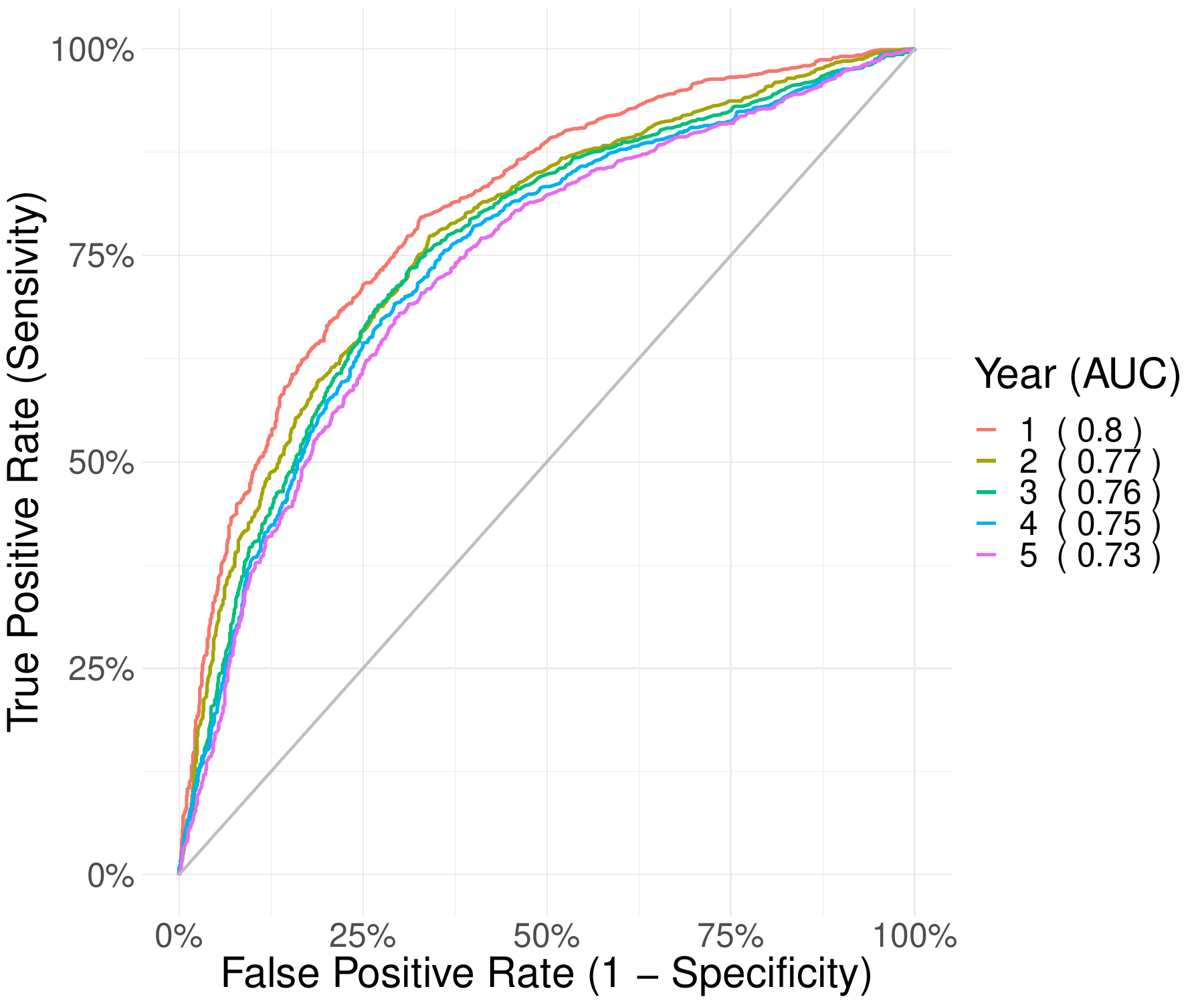}
\label{fig:roc}
\end{center}
\end{figure}

For the first year, the average AUC ranges from 79\% to 84\%, depending on the threshold chosen. The AUC decreases when forecasting for more years into the future, whereas, for the fifth year, the AUC ranges from 71\% to 74\%. 

We also measured the predictive performance of the model. We assume the decision-maker desires the logistic regression threshold that maximizes the model's accuracy. Therefore, we estimated the percentage of true positives, true negatives, false positives, and false negatives for the population, as shown in Figure \ref{fig:performance}.

\begin{figure}[htbp!]
\begin{center}
\caption{Forecast performance measurements by forecast year. For each forecast year, we present the false negative, false positive, true negative, and true positive percentages. In summary, the accuracy decreases when forecasting a later year.}
\includegraphics[scale=0.35]{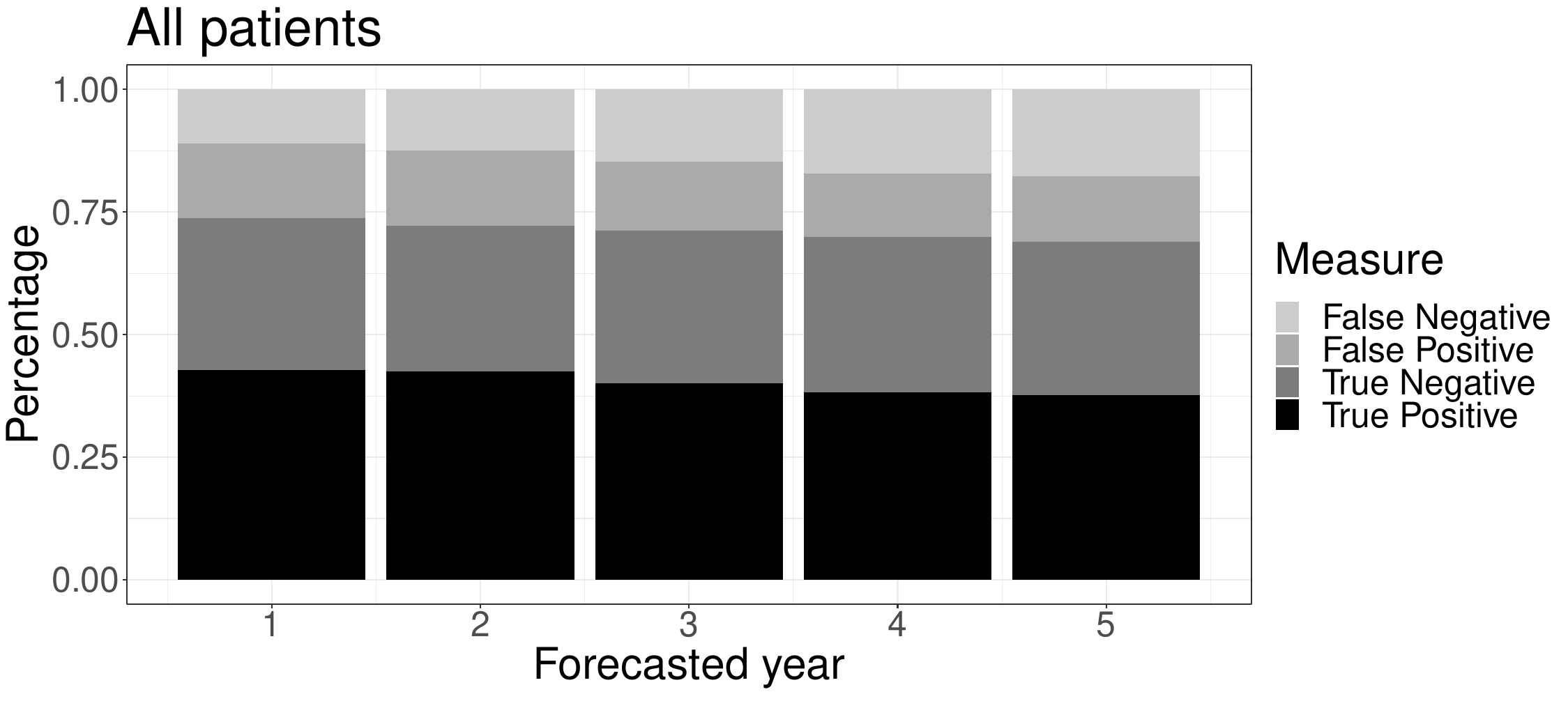}
\label{fig:performance}
\end{center}
\end{figure}

The model correctly forecasts 75\% of the patients for the first year and decreases to 70\% for the fifth year. This result shows that the model accuracy decreases when forecasting more years into the future. 

As the DLR model is intended to help healthcare providers decide which patients to include in an intervention, we focus on the percentage of false negatives. False negatives represent the portion of patients who stop adhering but for whom the model forecasts that they will adhere. Therefore, these are patients not considered for intervention. The percentage of false negatives ranges between 11\% for one year to 17\% for five years. 

\subsection{Simulation results}

We simulated the panel of patients consisting of 3753 patients from the VA population present in the case study. We apply each patient-intervention decision rule, assuming published estimates for adherence-improving interventions as shown in Table  \ref{tab:parameters4}.  As the available capacity may change depending on the intervention, we analyze how our model performs depending on different values.

\begin{table}[htbp]
    \centering
        \caption{Parameters estimates for the simulation model. The success of an intervention depends on the type of intervention provided to the patients. We present the range of each parameter when using the most common interventions.}
    \begin{tabular}{lccc}
         \hline
         Notation & Description & Value & Reference\\
         \hline
        $q$ & Probability of success of the intervention & $70\%$ to $90\%$ & \cite{Kini2018}\\
        $r$ & Adherence effect on risk reduction & $7\%$ to $12\%$ & \cite{Sussman2017}\\
    \hline
    \end{tabular}
    \label{tab:parameters4}\\

\end{table}

Additionally, we analyze the scenarios where the intervention capacity is between 15\% to 75\% of the population per year. We assume that the average capacity of 35\% of the population can be allocated and a probability of 80\% that the intervention works. We validate the model by comparing the simulation output risk without intervention, i.e., when the capacity of the intervention is 0. As there is no intervention, we update the patients based on available data. Therefore, the confidence intervals are 0, as no stochastic behavior is added to the model. The VA population average risk is 17\%, while the simulation output is $17.5\% \pm 0\%$, hence validating the model. This represents that 17,500 patients, in a population of 100,000, will have a CVD event in the next ten years if there is no adherence-improving intervention.    

We simulate the three patient-intervention decision rules, standard rule, BIP rule, and BIP-DLR rule. Additionally, we add an upper bound to the BIP-DLR rule (BIP-DLR-UB), in which we assume the probability that the intervention works is $1$. This scenario represents the best-case scenario so that we can analyze the effectiveness of the BIP-DLR algorithm. Additionally, we performed a sensitivity analysis, changing the capacity of the intervention, the probability of success, and the risk reduction. We present the sensitivity analysis in Figure \ref{fig:sen}.  

\begin{figure}[htbp!]
\begin{center}
\caption{Sensitivity analysis. We studied how the intervention capacity, the probability that the intervention works, and CVD risk reduction for patients in intervention affect each of the selection rules. We estimate the number of CVD events reduction in a population of 100,000 patients. 
}
\includegraphics[scale=0.72]{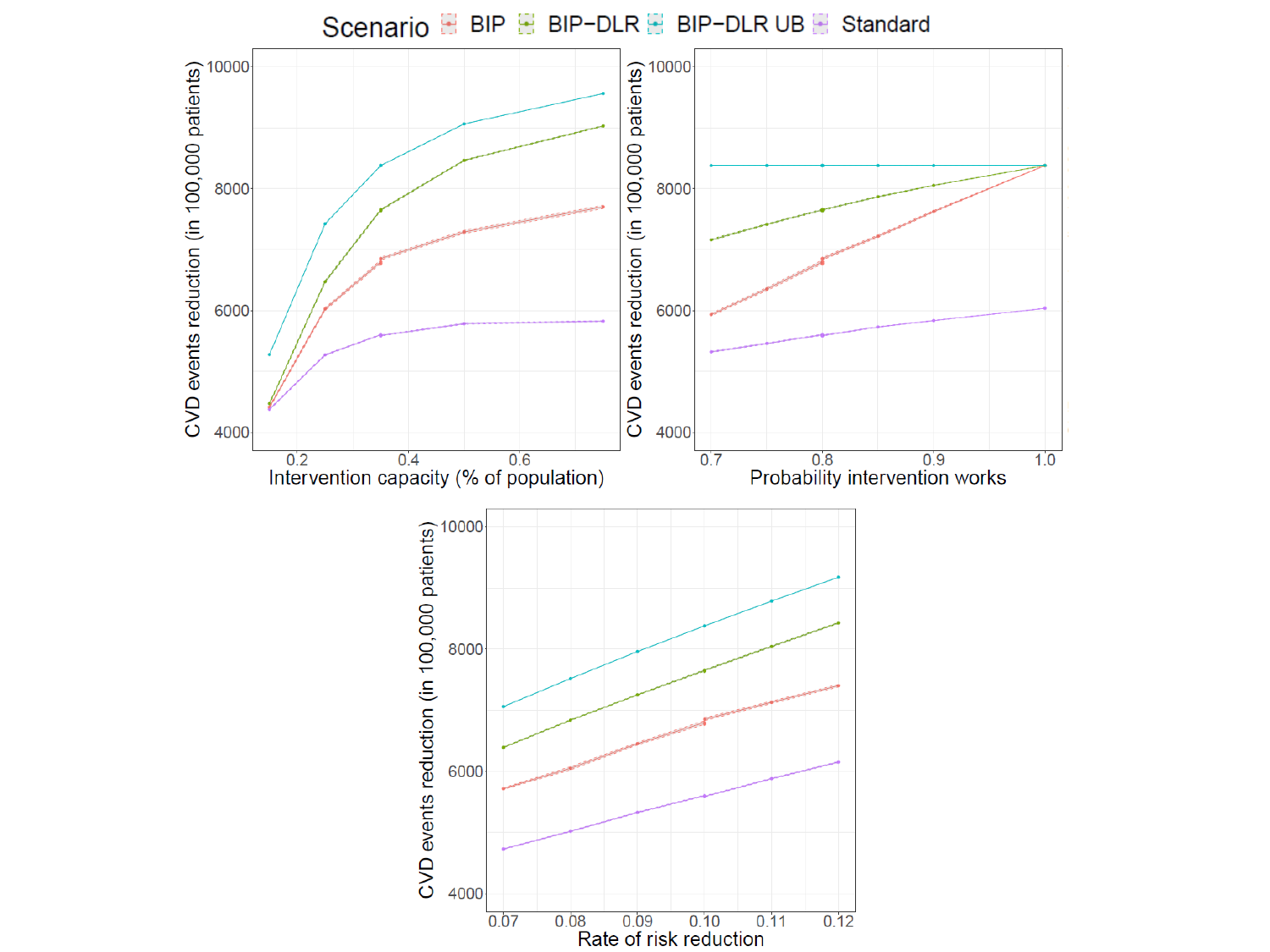}
\label{fig:sen}
\end{center}
\end{figure}

When using the base case parameters, we see that the standard rule reduces the number of CVD events by $5610 \pm 8.2$ in a population of 100,000 patients compared to not having adherence-improving interventions. Applying the BIP rule further reduces the number of CVD events by $1190\pm 28.86$ compared to the standard rule. Finally, when adding the DLR model, the BIP-DLR rule further reduces the number of CVD events by approximately 700 more, for a total of $1890\pm 12.63$ CVD events compared to the standard rule. This result shows that the BIP-DLR rule can reduce the number of CVD events by $36.5\%$ compared to the standard rule. 

Analyzing how the parameters affect the selection rules, we notice that there are diminishing returns from increasing intervention capacity. Additionally, at a higher capacity, there is a higher difference in terms of CVD reduction between the three rules. This result shows that selecting patients by the marginal difference in risk performs better than selecting by the patient's current CVD risk. Additionally, updating the adherence probabilities with the DLR model improves the results furthermore. With respect to the probability that the intervention succeeds, we notice that when it is closer to 1, there is no difference between using or not using the DLR model. This result is explained given that when the probability increases, the need to look into further periods in the future diminishes, and it is only necessary to look into the marginal difference in risk between the current and next epoch. Finally, we see that the risk reduction rate does not affect the difference between the rules. The performance is linear, given that the risk reduction linearly affects the CVD event reduction.

As mentioned in Section \ref{sec4:methods}, Theorem \ref{theo:meansol} proposes an algorithm for estimating the BIP optimal solution. Nevertheless, as shown in Figure \ref{fig:sen}, the difference between the BIP-DLR rule and the upper bound is less than the reduction that the rule already offers. This result acknowledges that using the BIP and DLR models together offers excellent results. We further tested the BIP-DLR rule for different adherence-improving interventions, where we used the average intervention success probability and monthly cost per patient found in the literature. We present the results in Table \ref{tab:intResults}.  

\begin{table}[htbp]
    \centering
        \caption{CVD events reduction by type of intervention when using the BIP-DLR rule. We compare different interventions by varying the intervention capacity. We estimate the intervention success probability and the monthly cost per patient from the literature.}
    \begin{tabular}{lccc|c|c}
         \hline
         & & &  \multicolumn{3}{c}{Intervention capacity} \\
         \cline{4-6}
         & &  &  25\% & 50\% & 75\%\\
         \cline{4-6}
         & Success & Cost &CVD events  &CVD events  &CVD events \\
         Intervention & probability  \cite{Kini2018} & per month & reduced (CI) & reduced (CI) & reduced (CI)\\
         \hline
         Financial incentives & 0.44 & \$ 42 \cite{Choudhry2011} & 4000 (21) & 6294 (17) & 7103 (21)\\
         Medication management & 0.51 & \$ 21.8 \cite{Armstrong2019} & 4546 (20) & 6930 (17) & 7642 (20)\\
         Cognitive behavioral therapy & 0.79 & \$62.11 \cite{Armstrong2019} & 6450 (11) & 8427 (14) & 8989 (8) \\
         Electronic pill count & 0.88 & \$32.06 \cite{Armstrong2019} &6921 (9) & 8746 (8) & 9269 (6)\\
         Pharmacist consultation & 0.89 & \$29.98 \cite{Ho2014} &6963 (7) & 8756 (7) & 9285 (5) \\
         \hline
    \end{tabular}
    \label{tab:intResults}\\
\end{table}

As we do not have an estimate of the intervention capacity, we tested different capacity percentages for each type of intervention. Additionally, we used the base case value of 0.1 for the risk reduction. Interventions within financial incentives and medication management tend to perform worse than other interventions, where the success probability is lower or equal to 0.51. Furthermore, interventions such as electronic pill count and pharmacist consultation perform almost similarly at a capacity of 25\% compared to the previous ones at a capacity of 75\%. Moreover, the monthly cost of financial incentives is higher than more successful interventions. Finally, the monthly cost of cognitive behavioral therapy interventions is double that of electronic pill count and pharmacist consultations, having a lower success probability.

\section{Discussion} \label{sec4:discusion}

We developed a DLR model that forecasts adherence for multiple years into the future. Our model determined the probability that the patient would stop adhering. Compared to the prior literature discussed in Section \ref{sec4:literature}, our approach further relaxed the assumption that adherence follows a specific behavior from a pool of defined trajectories. We incorporated random effects into our model, including patients' healthcare background, lifestyle, and access to healthcare, which is novel in the adherence forecasting literature. We also included the adherence history as a covariate, improving from previous studies that suggest including only the last measurement of adherence \cite{Zullig2019}. We found that including a year and a half of past adherence behavior, between 4 to 6 measurements, has a low prediction error, and including more history does not significantly improve the accuracy. Therefore, healthcare systems only need to store a year and a half of data to use this forecasting approach, reducing computing time and data requirements. 

We tested the DLR model with longitudinal data for cholesterol and blood pressure of a cohort of 10,000 randomly selected patients seen in the Veteran Affairs health system. The AUC ranged from 79\% to 84\% for the first year, depending on the PDC threshold chosen. Then, the AUC range decreased year by year until getting from 71\% to 74\% for the fifth year, depending on the PDC threshold chosen. Our model false positive measure tends to remain constant when predicting different years into the future. Therefore, the error of choosing patients for intervention when they do not need it is always relatively low, increasing the chances that patients that benefit the most from the interventions are chosen first. 

We developed a BIP model that selects patients from a heterogeneous population for interventions. Additionally, we estimated the marginal risk reduced when selecting a patient for intervention versus waiting for the next intervention cycle. We proved that the BIP optimal policy is equivalent to sorting patients by the marginal risk reduced and selecting patients from highest to lowest until the intervention capacity is depleted. Thus, the BIP has a simple polynomial time solution for this special case of the multiple knapsack problem. 

We simulated the application of DLR within the BIP when deciding which patients to include in an increase-adherence intervention. Applying the adaptive heuristic that includes the BIP and DLR models reduces the number of CVD events by $36.5\%$ compared to a base case rule of sorting patients by their current risk. Additionally, we compared different types of interventions and analyzed their performance at different intervention capacities. We recommend applying interventions that consider electronic pill counts and pharmacist consultations, as their performance is higher than other interventions and at a lower cost. Finally, these results show that adaptive approaches successfully capture adherence dynamics that fluctuate over time. 

Despite our best efforts, there are notable limitations to our study. The VA data set corresponds to patients with better access to healthcare, tend to have more frequent follow-ups, and have higher medication adherence \cite{Statistics2018}. Additionally, the VA population primarily corresponds to white and black male patients; unfortunately, we cannot access information about other races or ethnicities. Nevertheless, our model shows that the forecast has similar outcomes to other demographic groups. Therefore, a population with less access to healthcare will benefit from a more significant risk reduction, as the intervention will improve health benefits. Additionally, we understand that using PDC to estimate adherence implies the assumption that patients take all the refilled medication. Also, intervention failure probabilities are considered one size fits all, which may not be true in practice. While this may be reasonable for a population study such as ours, there could be future opportunities to consider tailoring interventions based on individual patient risk factors if such data becomes available. Some studies suggest that questionnaires, constant surveillance, and electronic pill counts are the most effective to measure adherence \cite{Shi2010} \cite{Zeller2008}. However, gathering information for these methods is costly \cite{Cherry2009}, especially in the context of studies, such as ours, that utilize longitudinal data. 

These limitations notwithstanding, our study offers a foundation for future extensions. Future work might consider improving the patient selection model by defining a dynamic stochastic model that includes the DLR prediction and the patients' health stochastic behavior. Our adaptive selection rule provides a starting point to investigate some of these approaches and further decrease the number of CVD events in the population. Additionally, we understand that our studied population has certain characteristics that differ from the general population. We propose to analyze the model's accuracy with a different population mix as a natural next step. Finally, our model assumes that the probability of the intervention working and the adherence effect on risk reduction is the same for all patients. The surveyed studies on adherence interventions analyze the overall effects of successful interventions on patients. Therefore, we recommend updating the patient selection model with studies that analyze the differences between patients regarding intervention success. Testing the model with data that has constant adherence follow-up or electronic pill count may increase the forecast accuracy. Finally, our approach could find applications in multiple diseases that rely on adherence to medication for long periods.

\section{Conclusions} \label{conclusions}

After the first year on medication, on average, 50\% of the patients will adhere, increasing the risk of cardiovascular diseases. For many patients, adherence decreases over time, and healthcare providers must identify when patients will stop adhering to prevent further complications. Different adherence-improving interventions exist, where the most efficient ones are also the most expensive and thus likely limited in their availability. Therefore, healthcare providers need to be selective in choosing patients for intervention to mitigate future low adherence. Our study shows that using DLR models with random effects can potentially alert healthcare providers to which patients will stop adhering. Additionally, the forecast supports decisions associated with selecting patients for adherence-improving interventions. Thus, we propose an adaptive selection rule that combines a BIP model with the DLR model to select patients. We prove, under sufficient conditions, that to minimize the number of CVD events in the population, healthcare providers should select patients with the highest marginal risk reduced. Our case study, based on a large population of patients seen in the VA, suggests our adaptive selection rule may significantly reduce the number of CVD events.

\appendix
\section{Appendix}

\subsection{Proof for Proposition 1} \label{app:prop}

For any epoch $t$, $P_{it0}=(1-\hat{y}_{it})(1-\hat{y}_{it+1})\cdots(1-\hat{y}_{iT})$ and $P_{i(t-1)0}=(1-\hat{y}_{it-1})(1-\hat{y}_{it})\cdots(1-\hat{y}_{iT})$. Then, $P_{i(t-1)0}=(1-\hat{y}_{it-1})P_{it0}$. For any $\tau >0$ and $\tau <T-t+2$, $P_{i(t-1)\tau}=P_{it\tau}(1-\hat{y}_{it-1})+\hat{y}_{it-1}P_{it(\tau-1)}$, as $P_{it\tau}(1-\hat{y}_{it-1})$ represents the probability that in $\tau$ epochs the patient is non-adherent between $t$ and $T-t+2$ and is adherent en $t-1$, and  $\hat{y}_{it-1}P_{it(\tau-1)}$ represents the patient is non-adherent in $\tau-1$ epochs between $t$ and $T-t+2$ and is non-adherent en $t-1$. Finally, $P_{i(t-1)(T-t+2)}=\hat{y}_{it-1}P_{it(T-t+1)}$. Then the difference between $a_{it-1}$ and $a_{it}$ is estimated as:

\begin{eqnarray*}
a_{it-1}-a_{it}&=&CVD_i - q\sum_{\tau=0}^{T-t+2} P_{it-1\tau}(1-r)^{\tau}CVD_i -CVD_i + q\sum_{\tau=0}^{T-t+1} P_{it\tau}(1-r)^{\tau}CVD_i\\
&=& q\sum_{\tau=0}^{T-t+1} P_{it\tau}(1-r)^{\tau}CVD_i - q\sum_{\tau=0}^{T-t+2} P_{it-1\tau}(1-r)^{\tau}CVD_i\\ 
&=& qCVD_i\Big(\sum_{\tau=0}^{T-t+1} P_{it\tau}(1-r)^{\tau}-\sum_{\tau=0}^{T-t+2} P_{it-1\tau}(1-r)^{\tau}\Big)\\
&=&qCVD_i\Big(P_{it0} - (1-\hat{y}_{it-1})P_{it0} - \hat{y}_{it-1}P_{it(T-t+1)}(1-r)^{T-t+2}\\ 
&+& \sum_{\tau=1}^{T-t+1} P_{it\tau}(1-r)^{\tau}- (P_{it\tau}(1-\hat{y}_{it-1})+\hat{y}_{it-1}P_{it(\tau-1)})(1-r)^{\tau}\Big)\\
&=&qCVD_i\Big(\hat{y}_{it-1}P_{it0} - \hat{y}_{it-1}P_{it(T-t+1)}(1-r)^{T-t+2}\\ 
&+& \sum_{\tau=1}^{T-t+1} P_{it\tau}\hat{y}_{it-1}(1-r)^{\tau}-\hat{y}_{it-1}P_{it(\tau-1)}(1-r)^{\tau}\Big).\\
\end{eqnarray*}

\noindent By reorganizing the terms, the difference is estimated as:

\begin{eqnarray*}
a_{it-1}-a_{it} &=&qCVD_i\Big(\hat{y}_{it-1}P_{it0} - \hat{y}_{it-1}P_{it0}(1-r) + \hat{y}_{it-1}P_{it1}(1-r) - \hat{y}_{it-1}P_{it1}(1-r)^2 \\
&+& \cdots + \hat{y}_{it-1}P_{it(T-t+1)}(1-r)^{T-t+1} - \hat{y}_{it-1}P_{it(T-t+1)}(1-r)^{T-t+2}\Big)\\
&=&qCVD_i\Big(\hat{y}_{it-1}P_{it0}(1-(1-r)) + \hat{y}_{it-1}P_{it1}(1-r)(1-(1-r)) \\
&+& \cdots + \hat{y}_{it-1}P_{it(T-t+1)}(1-r)^{T-t+1}(1-(1-r))\Big)\\
&=&qCVD_i\sum_{\tau=0}^{T-t+1}r\hat{y}_{it-1}P_{it\tau}(1-r)^{\tau}\\
&=&r\hat{y}_{it-1}q\sum_{\tau=0}^{T-t+1}P_{it\tau}(1-r)^{\tau}CVD_i\\
&=&r\hat{y}_{it-1}(CVD_i-a_{it}).\\
\end{eqnarray*}

\noindent Therefore, since $r\hat{y}_{it-1}(CVD_i-a_{it}) \geq 0$, it follows that $a_{i1} \geq a_{i2} \geq \cdots \geq a_{iT}$. $\square$

\subsection{Proof for Theorem 1} \label{app:theo}

Define a Markov decision process (MDP) model with the following components:
\begin{itemize}
    \item Epochs: Instant in time when patients are selected for interventions.
    \item States: Define $s_{it}$ such that if $s_{it}=1$ then patient $i$ was selected for intervention before $t$, and $0$ otherwise. Define $\texttt{X}_t^0$ as the set of patients who are available for interventions in epoch $t$, then if $s_{it}=0$, then $i\in \texttt{X}_t^0$. Define $\texttt{X}_t^1$ as the set of patients who are not available for interventions at the start of epoch $t$, then if $s_{it}=1$, then $i\in \texttt{X}_t^1$.
    \item Decisions: Define $\texttt{D}_t$ as the set of feasible decisions to make in epoch $t$. If $ \boldsymbol d_t\in \texttt{D}_t$, then $\boldsymbol d_t$ is a vector of size $n$ where $d_{it}=1$ if patient $i$ is selected for intervention in epoch $t$ and $0$ otherwise. Additionally, if $i\in\texttt{X}_t^1$, then $d_{it}=0$ such that $\sum_i d_{it} \leq c$ $\forall$ $i,t$.  
    \item Transitions: If in $t$, $i\in \texttt{X}_t^0$ and $d_{it}=1$, then $i\in\texttt{X}_{t+1}^1$ and $i \notin \texttt{X}_{t+1}^0$. Otherwise, patient $i$ will remain in $\texttt{X}_t^0$ until the next epoch. Then we define $\texttt{W}^0$ and $\texttt{W}^1$ as the functions that represents the sets in $t+1$, such that  $\texttt{X}_{t+1}^0=\texttt{W}^0(\texttt{X}_{t}^0,\boldsymbol d_t)$ and $\texttt{X}_{t+1}^1=\texttt{W}^1(\texttt{X}_{t}^1,\boldsymbol d_t)$.
    \item Immediate rewards: The immediate rewards of taking decision $\boldsymbol d_t$ in epoch $t$ are estimated as $\sum_{i \in \texttt{X}_t^0}a_{it}d_{it}$.
\end{itemize}

Given the components of the MDP, define $V_t(\texttt{X}_t^0,\texttt{X}_t^1)$ as the total maximal risk reduction from epoch $t$ to epoch $T$, where the Bellman's equations are as follows:

\[V_t\big(\texttt{X}_t^0,\texttt{X}_t^1\big)=\max_{\boldsymbol d_t \in D_t}\Big\{\sum_{i \in \texttt{X}_t^0}a_{it}d_{it}+V_{t+1}\big(\texttt{W}^0(\texttt{X}_{t}^0,\boldsymbol d_t),\texttt{W}^1(\texttt{X}_{t}^1,\boldsymbol d_t))\Big\} \quad \forall (\texttt{X}_t^0,\texttt{X}_t^1\big),\forall t\in \texttt{E}-\{T\},\]

\noindent where the boundary conditions are $V_T\big(\texttt{X}_T^0,\texttt{X}_T^1\big)=\max_{d_T \in D_T}\Big\{\sum_{i \in \texttt{X}_T^0}a_{iT}d_{iT}\Big\}$ for all $ (\texttt{X}_t^0,\texttt{X}_t^1\big)$ . This MDP is equivalent to the BIP in Equations 6 to 9, where the constraints are represented in the decision set. Therefore, the MDP's optimal value and policy are the same optimal value and solution of the BIP model.

Now we prove the theorem by induction. For epoch $T$, without loss of generality, the patients in $\texttt{X}_{T}^0$ can be organized such that for all $i$ and $j$ where $i \leq j$ we have that $a_{iT} \geq a_{jT}$. Then, the policy that maximizes the rewards is to choose the first $c$ patients that belong to $\texttt{X}_{T}^0$, i.e., the first $c$ patients with the highest values of $a_{iT}$.

For $t=T-1$, define $d^*_{T}$ as the optimal policy for epoch $T$ and $d^*_{T-1}$ as the optimal policy for epoch $T-1$. Given the construction of $D_T$ and $D_{T-1}$ we know that if $d_{iT-1}^* = 1$ then $ d_{iT}^*=0$ and if $ d_{iT}^*=1$ then $d_{iT-1}^* = 0$. Let $n_{0T-1}=|\texttt{X}_{T-1}^0|$, then, without loss of generality, assume that patients $1$ to $n_{0T-1}$ belong to $\texttt{X}_{T-1}^0$ and patients $n_{0T-1}+1$ to $n$ belong to $\texttt{X}_{T-1}^1$ and are organized such that if $i,j \in \texttt{X}_{T-1}^0$ and $i<j$, then $\hat{y}_{it}(CVD_i-a_{iT})\geq\hat{y}_{jt}(CVD_j-a_{jT})$. 

Let $\boldsymbol d'_{T-1}$ be such that $d'_{1T-1}=d'_{2T-1}=\cdots=d'_{cT-1}=1$ and $d'_{(c+1)(T-1)}=\cdots=d'_{nT-1}=0$. Also, let $V'_{T-1}\big(\texttt{X}_{T-1}^0,\texttt{X}_{T-1}^1\big)$ be the expected maximum total risk reduced from epoch $T-1$ when taking decision $d'_{T-1}$, then:

\[V'_{T-1}\big(\texttt{X}_t^0,\texttt{X}_t^1\big)=\sum_{i =1}^ca_{iT-1}+V_{T}\big(\texttt{W}^0(\texttt{X}_{T-1}^0,\boldsymbol d'_{T-1}),\texttt{W}^1(\texttt{X}_{T}^1,\boldsymbol d'_{T-1})).\]

\noindent Let $\boldsymbol d''_{T-1}$ such that for a particular $j\leq c$, $d''_{jT-1}=0$, $d''_{(c+1)T-1}=1$, and  for all $i \neq j,c+1$, $d''_{iT-1}=d'_{iT-1}$. Also, let $V''_{T-1}\big(\texttt{X}_{T-1}^0,\texttt{X}_{T-1}^1\big)$ be the expected maximum total risk reduced from epoch $T-1$ when taking decision $d''_{T-1}$, then:

\[V''_{T-1}\big(\texttt{X}_{T-1}^0,\texttt{X}_{T-1}^1\big)=\sum_{i =1}^{j-1}a_{iT-1}+\sum_{i =j+1}^{c+1}a_{iT-1}+V_{T}\big(\texttt{W}^0(\texttt{X}_{T-1}^0,\boldsymbol d''_{T-1}),\texttt{W}^1(\texttt{X}_{T}^1,\boldsymbol d''_{T-1})).\]

\noindent Let $\delta=V'_{T-1}\big(\texttt{X}_{T-1}^0,\texttt{X}_{T-1}^1\big)-V''_{T-1}\big(\texttt{X}_{T-1}^0,\texttt{X}_{T-1}^1\big)$, then: 

\begin{eqnarray*}
\delta&=&V'_{T-1}\big(\texttt{X}_{T-1}^0,\texttt{X}_{T-1}^1\big)-V''_{T-1}\big(\texttt{X}_{T-1}^0,\texttt{X}_{T-1}^1\big)\\
&=& a_{jT-1}-a_{c+1 T-1}+V_{T}\big(\texttt{W}^0(\texttt{X}_{T-1}^0,\boldsymbol d'_{T-1}),\texttt{W}^1(\texttt{X}_{T}^1,\boldsymbol d'_{T-1}))-V_{T}\big(\texttt{W}^0(\texttt{X}_{T-1}^0,\boldsymbol d''_{T-1}),\texttt{W}^1(\texttt{X}_{T}^1,\boldsymbol d''_{T-1}))\\
&=& a_{jT-1}-a_{c+1 T-1}+\sum_{i=c+1}^{2c}a_{iT}-a_{jT}-\sum_{i=c+2}^{2c}a_{iT}\\
&=&a_{jT-1}-a_{c+1 T-1}+a_{c+1 T}-a_{jT}.\\
\end{eqnarray*}

Using Proposition 1, given that for all $i$, $a_{iT-1}-a_{iT}=\hat{y}_{it}(CVD_i-a_{iT})$, then $\delta=\hat{y}_{jt}(CVD_j-a_{jT})-\hat{y}_{c+1t}(CVD_{c+1}-a_{c+1T})$. The patients were sorted such that if $j<c+1$, then $\delta \geq 0$ and $V'_{T-1}\big(\texttt{X}_{T-1}^0,\texttt{X}_{T-1}^1\big) \geq V''_{T-1}\big(\texttt{X}_{T-1}^0,\texttt{X}_{T-1}^1\big)$. Therefore $d'_{T-1}$ is the policy that maximizes the total risk reduced from $T-1$ to $T$. Repeating the steps for $t=1$ to $T-2$, we have that:

\[V_{1}\big(\texttt{X}_{1}^0)=\sum_{i=1}^ca_{i1}+\sum_{i=c+1}^{2c}a_{i2}+\cdots+\sum_{i=(T-1)c+1}^{Tc}a_{iT}.\]

\noindent If $n \leq cT$ then define $\gamma$ where $ (\gamma-1)c \leq n \leq \gamma c$. Then, 

\[V_{1}\big(\texttt{X}_{1}^0)=\sum_{i=1}^ca_{i1}+\sum_{i=c+1}^{2c}a_{i2}+\cdots+\sum_{i=(\gamma-1)c+1}^nca_{i\gamma}.\quad \square\] 

\subsection{Additional graphs} \label{app:graphs}

\begin{figure}[htbp!]
\begin{center}
\caption{Correlation matrix for the logistic regression.}
\includegraphics[scale=0.55]{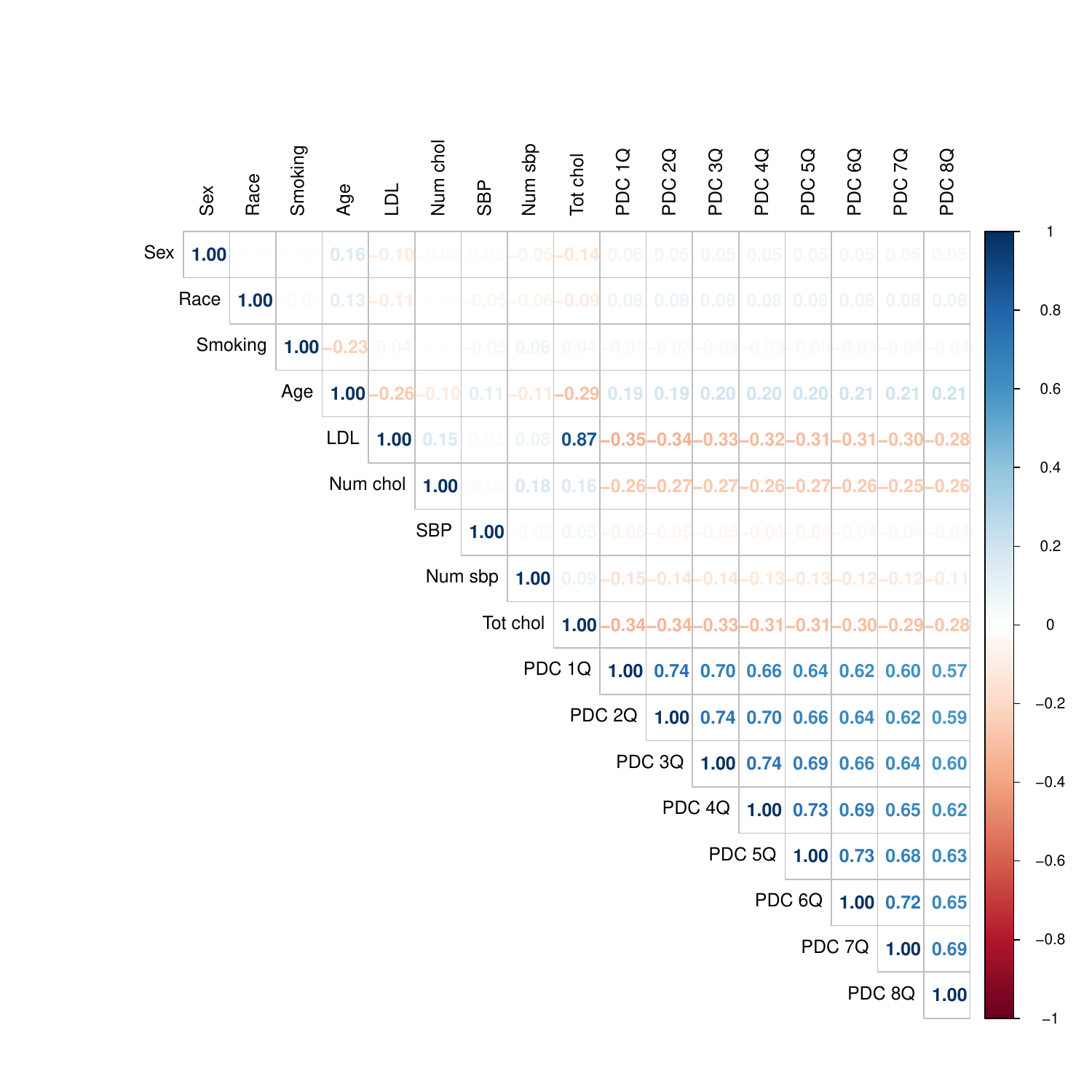}
\label{fig:corr}
\end{center}
\end{figure}
\newpage

\begin{figure}[htbp!]
\begin{center}
\caption{ROC and AUC for the each of the 5 forecasting years for adherence thresholds of 0.6 to 0.9.}
\includegraphics[scale=0.28]{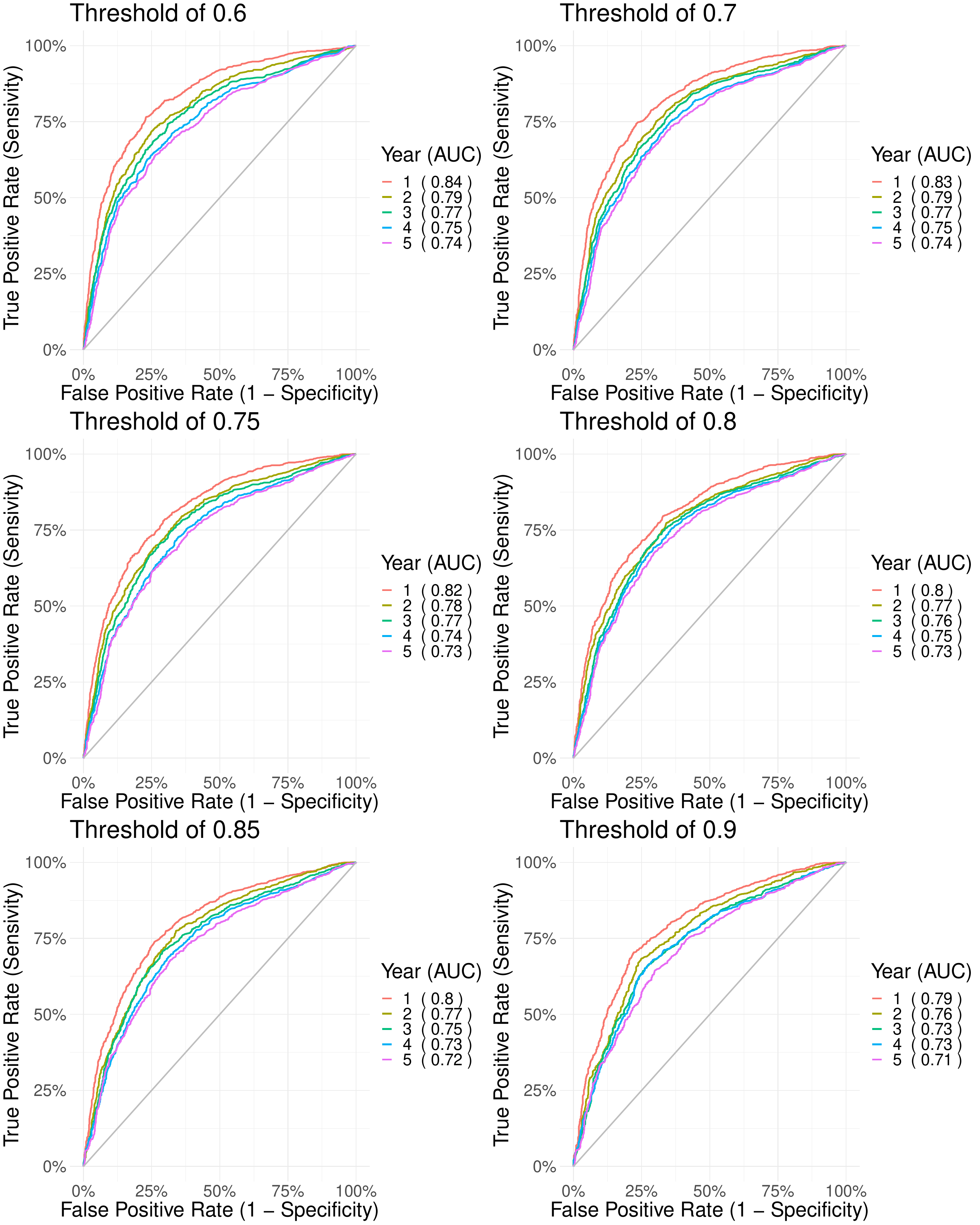}
\label{fig:rocall}
\end{center}
\end{figure}
\newpage

\end{document}